\author{Matthieu Rosenfeld\\
\small \it LIRMM, CNRS, Universit\'e de Montpellier}
\newtheorem{theorem}{Theorem}
\newtheorem*{theorem*}{Theorem}
\newtheorem{lemma}[theorem]{Lemma}
\newtheorem{fact}[theorem]{Fact}
\newtheorem{corollary}[theorem]{Corollary}
\theoremstyle{remark}
\newcommand{\A}{\mathcal{A}}
\newcommand{\F}{\mathcal{F}}
\title{Finding lower bounds on the growth and entropy of subshifts over countable groups}
\begin{document}
\maketitle
\begin{abstract}
We provide a lower bound on the growth of a subshift based on a simple condition on the set of forbidden patterns defining that subshift. Aubrun et Al. showed a similar result based on the Lov\'asz Local Lemma for subshifts over any countable group, and Bernshteyn extended their approach to deduce some lower bound on the exponential growth of the subshift. Our result has a simpler proof, is easier to use for applications, and provides better bounds on the applications from their articles (although it is not clear that our result is strictly stronger in general).

In the particular case of subshifts over $\mathbb{Z}$, Miller gave a similar, but weaker condition that implied the nonemptiness of the associated shift. Pavlov used the same approach to provide a condition that implied exponential growth. We provide a version of our result for this particular setting, and it is provably strictly stronger than the result of Pavlov and the result of Miller. In practice, it leads to considerable improvement in the applications.

We also apply our two results to a few different problems including strongly aperiodic subshifts, nonrepetitive subshifts, and Kolmogorov complexity of subshifts.
\end{abstract}

\section{Introduction}
Let $\A$ be a finite alphabet and $(G,.)$ be a group. The elements of $\A$ are called \emph{letters}. A \emph{configuration} is an element of the set $\A^G=\{x:G\rightarrow \A\}$ (which we can see as a coloring of the elements of $G$ by $A$). A \emph{support} is a non-empty finite set $S\subseteq G$ and a \emph{pattern with support $S$} is an element of $\A^S$.
A pattern $p\in \A^S$ \emph{appears} in a configuration $x\in\A^G$ if there exists $g\in G$ such that for all $s\in S$, $x(gs)=p(s)$. We then say that $p$ appears in $X$ at \emph{position} $g$. If $p$ does not appear in $x$, then $x$ \emph{avoids} $p$.
For any set of patterns $\F$, $X_\F$, is the set of configurations avoiding  $\F$, that is
\begin{equation*}
X_{\F}=\{x\in \A^G: \forall f\in\F, x\text{ avoids } f\}\,.
\end{equation*}
A \emph{subshift} $X\subseteq \A^G$ is a set of configurations defined by a set of forbidden patterns, that is, $X$ is a subshift if there exists a set of patterns $\F$ such that $X= X_\F$ ($\F$ is not necessarily unique).

It seems natural that if the set of forbidden patterns is ``small'' enough, then $X_\F$ should be non-empty regardless of what exactly is in $\F$. Intuitively, it is easier to forbid a few patterns than many patterns and it is easier to forbid large patterns than small patterns. Aubrun, Barbieri, and Thomassé gave a sufficient condition for a subshift to be non-empty and this condition only depends on the sizes of the different forbidden patterns \cite{abb2019}. Their result relies on the Lovász Local Lemma. Bernshteyn extended on their idea and showed in a larger but similar context that similar conditions even implied some lower-bounds on the ``size'' of the subshift \cite{Bernshteyn}. He considered five different notions of ``size of a subshift'', one of them being the so-called topological entropy of a subshift. Intuitively, the topological entropy of a subshift describes the growth of the number of orbits in the associated dynamical system. From a more combinatorial point of view, the complexity of a subshift is the function that maps $n$ to the number of different patterns that appear in the subshift by translation of a given support of size $n$ and the entropy is the logarithm of the growth rate of the complexity.

If instead of any group we restrict our attention to $G=\mathbb{Z}$, we can use the structure of $\mathbb{Z}$ to obtain more precise results. In this setting, Pavlov gave a condition on $\F$ that implies lower bounds on the entropy of $X_\F$ \cite{pavlov}. His proof was based on an idea introduced by Miller to show that a subshift is nonempty \cite{MillerSubshift}. He then used this condition to deduce a condition for other interesting properties of subshifts such as the uniqueness of measures of maximal entropy.

In this article, we give a general lower bound on the entropy of $X_\F$ that only depends on the size of the patterns of $\F$. Our condition is hard to compare to the bounds of \cite{abb2019} and \cite{Bernshteyn}, but it seems to give better results in general and to be easier to optimize. The authors of \cite{abb2019}, use their criterion to prove the existence of nonempty aperiodic shifts over any countable group and we strengthen their result by showing that there exist strongly aperiodic subshifts with entropy arbitrarily close to the entropy of the full-shift. We also deduce that there exist nonrepetitive colorings of the Cayley graph of a group using a much smaller alphabet (by a factor $\approx 2^{17}$) than the one given in \cite{abb2019}.

We then focus our attention on the special case $G=\mathbb{Z}$ and we provide a condition that is strictly better than the one given in \cite{pavlov}. In practice, the improvement on the resulting bounds seems to be considerable.

There are two main ingredients to our proof. The first one is to show that if the groups is countable and amenable, then the growth rate of the number of patterns that avoids $\F$ is the same as the growth rate of the number of patterns that appear in a configuration of $X_\F$. It seems to be folklore, but we were not able to find this result in the literature. Shur showed a version of this result restricted to the case $G=\mathbb{Z}$ \cite{shurcomplexity}. His proof relied on automata and regular languages, while our proof only relies on combinatorics and topology. The same result was also proven over $\mathbb{Z}^d$, but for the specific case of subshifts of finite type \cite{FriedlandZd, HochmanZd}. We then provide some lower bounds on the number of patterns avoiding $\F$, which implies a lower bound on the entropy of the associated subshift.

The argument used for our lower bound is a simple counting technique recently introduced in \cite{rosenfeldCounting} and already used in a few different settings \cite{BernshteynTriangleFree,pirotTriangleFree,Thuelist,wanlessWood}. In the setting of combinatorics on words, a similar technique was already known under the name \emph{power series method} \cite{BELL20071295,BLANCHETSADRI201317,doublepat,rampersadpowerseries}. The conditions in \cite{MillerSubshift,pavlov} resemble the conditions obtained by the power series methods, but it appears that the link was never established. In fact, if $G=\mathbb{Z}$, Lemma \ref{lemmacountingZ} can also be deduced as a particular case of the conditions from \cite{doublepat}.

The article is organized as follows. We first provide a few definitions and notations. Then we prove in Section \ref{growthsec} that, for countable amenable groups, the growth rate of locally admissible patterns is the same as the growth rate of globally admissible patterns. We then give our lower bound on the growth rate in the general case followed by an application of this bound. We conclude with our bound in the particular setting of $G=\mathbb{Z}$ and a comparison with the bound given by Pavlov.

\section{Definitions and notations}
For any set $S$ and any integer $n$, $\binom{S}{n}$ is the set of all subsets of $S$ of size $n$. For any function $f:A\rightarrow B$ and $A'\subseteq A$, the \emph{restriction of $f$ to $A'$} is the function $f\big|_{A'}: A'\rightarrow B$ such that for all $a\in A'$, $f\big|_{A'}(a)=f(a)$.
Given a set of functions $C\subseteq B^A$ and a subset $A'\subset A$, we write $C\big|_{A'}=\{c\big|_{A'} : c\in C\}$. For any two sets $A$ and $B$ of elements of a group $(G,\cdot)$, we write $A\cdot B= \{a\cdot b: a\in A, b\in B\}$ and $A^{-1}=\{a^{-1}:a\in A\}$.

The support of a pattern $p$ is given by $\operatorname{supp}(p)=S$.
The size of a pattern is defined to be equal to the size of its support, that is, $|p|= |\operatorname{supp}(p)|$.
The set of non-empty patterns over $G$ is
\begin{equation*}
\A^+_G=\bigcup_{\substack{S\subset G\\0< |S|<\infty}}\A^S\,.
\end{equation*}

For any set of patterns $\F\subseteq\A^+_G$, we say that a pattern $p$ is \emph{globally admissible} if there exists $x\in X_\F$ such that $p$ appears in $x$. It is \emph{locally admissible} if $p$ avoids all patterns $f\in \F$. For all support $S\subset G$ and set of forbidden patterns $\F\subseteq\A^+_G$, we let $\mathcal{G}_\F^{(S)}$ and $\mathcal{L}_\F^{(S)}$ be respectively the set of globally and locally admissible patterns of support $S$. By definition, every globally admissible pattern is necessary locally admissible, that is, $\mathcal{G}_\F^{(S)}\subseteq \mathcal{L}_\F^{(S)}$ for all $S$. For any set of forbidden patterns  $\F\subseteq\A^+_G$ and integer $n$, we define the \emph{global complexity} of  $X_\F$ as
\begin{equation*}
\mathbf{G}^{(n)}_\F= \min_{S\in \binom{G}{n}}\left|\mathcal{G}_\F^{(S)}\right|\,,
\end{equation*}
and the \emph{local complexity} as
\begin{equation*}
\mathbf{L}^{(n)}_\F= \min_{S\in \binom{G}{n}}\left|\mathcal{L}_\F^{(S)}\right|\,.
\end{equation*}
The \emph{global growth} of $X_\F$ is given by
\begin{equation*}
\alpha(\F)=\lim_{n\rightarrow \infty} \left(\mathbf{G}_\F^{(n)}\right)^{1/n}\,,
\end{equation*}
and the \emph{local growth} is
\begin{equation*}
\tilde\alpha(\F)=\lim_{n\rightarrow \infty}\left(\mathbf{L}_\F^{(n)}\right)^{1/n}\,.
\end{equation*}
Let us justify that the local and the global growth are well-defined.
\begin{lemma}\label{Feketeapp}
For any $\F\subseteq\A^+_G$, the quantities $\alpha(\F)$ and $\tilde\alpha(\F)$ are well-defined. Moreover, for all $n\ge0$,
  \begin{equation*}
\left(\mathbf{G}_\F^{(n)}\right)^{1/n}\ge \alpha(\F)\,,\quad\text{ and } \quad
\left(\mathbf{L}_\F^{(n)}\right)^{1/n}\ge \tilde\alpha(\F)\,. 
  \end{equation*}
\end{lemma}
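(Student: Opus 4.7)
The plan is to reduce both claims to the multiplicative form of Fekete's lemma. Specifically, I would show that the sequences $\bigl(\mathbf{G}_\F^{(n)}\bigr)_{n\geq 1}$ and $\bigl(\mathbf{L}_\F^{(n)}\bigr)_{n\geq 1}$ are submultiplicative; Fekete then gives existence of the corresponding limits together with the infimum characterisation
$$\alpha(\F)=\inf_{n\geq 1}\bigl(\mathbf{G}_\F^{(n)}\bigr)^{1/n}, \qquad \tilde\alpha(\F)=\inf_{n\geq 1}\bigl(\mathbf{L}_\F^{(n)}\bigr)^{1/n},$$
which is already the first pointwise bound of the lemma. The second bound is then automatic: since $\mathcal{G}_\F^{(S)}\subseteq\mathcal{L}_\F^{(S)}$ for every $S$, we have $\mathbf{L}_\F^{(n)}\geq \mathbf{G}_\F^{(n)}$ and hence $\tilde\alpha(\F)\geq\alpha(\F)$, so $(\mathbf{L}_\F^{(n)})^{1/n}\geq\tilde\alpha(\F)\geq\alpha(\F)$.

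Before the submultiplicativity step, I would record two elementary ingredients. First, \emph{shift-invariance}: for any $g\in G$ and any finite $S\subseteq G$, the map $p\mapsto(s\mapsto p(gs))$ is a bijection $\mathcal{G}_\F^{(gS)}\to\mathcal{G}_\F^{(S)}$, because ``appearing in a configuration'' is witnessed by a translation that one can post-compose with $g$; the same map works for $\mathcal{L}$. In particular $|\mathcal{G}_\F^{(gS)}|=|\mathcal{G}_\F^{(S)}|$ and $|\mathcal{L}_\F^{(gS)}|=|\mathcal{L}_\F^{(S)}|$. Second, \emph{restriction}: for disjoint finite $S_1, S_2\subseteq G$, if $p\in\mathcal{G}_\F^{(S_1\sqcup S_2)}$ is witnessed by $x\in X_\F$ and a translation $h$, then $p|_{S_i}$ is witnessed by the same pair and lies in $\mathcal{G}_\F^{(S_i)}$; for $\mathcal{L}$ it is even easier, since any forbidden pattern appearing in $p|_{S_i}$ a fortiori appears in $p$.

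Now to the submultiplicative inequality. Given $m,n\geq 1$, pick $S_1\in\binom{G}{m}$ and $S_2\in\binom{G}{n}$ realising the respective minima in $\mathbf{G}_\F^{(m)}$ and $\mathbf{G}_\F^{(n)}$. Assuming $G$ is infinite (when $G$ is finite the statement becomes vacuous past $|G|$), the ``collision'' set $\{s_1 s_2^{-1} : s_1\in S_1,\, s_2\in S_2\}$ is finite and hence cannot exhaust $G$, so one may select $g\in G$ with $S_1\cap gS_2=\emptyset$. By the restriction property the map $p\mapsto(p|_{S_1},\,p|_{gS_2})$ is injective on $\mathcal{G}_\F^{(S_1\sqcup gS_2)}$, and combining with shift-invariance yields
$$\mathbf{G}_\F^{(m+n)} \leq \bigl|\mathcal{G}_\F^{(S_1\sqcup gS_2)}\bigr| \leq \bigl|\mathcal{G}_\F^{(S_1)}\bigr|\cdot\bigl|\mathcal{G}_\F^{(gS_2)}\bigr| = \mathbf{G}_\F^{(m)}\cdot\mathbf{G}_\F^{(n)}.$$
The identical argument, with $\mathcal{L}$ replacing $\mathcal{G}$, gives $\mathbf{L}_\F^{(m+n)}\leq\mathbf{L}_\F^{(m)}\cdot\mathbf{L}_\F^{(n)}$, and applying Fekete concludes.

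There is no real obstacle; only two small points deserve a sentence each. The degenerate situation $\mathbf{G}_\F^{(n)}=0$ for some $n$ (equivalently $X_\F=\emptyset$) forces $\alpha(\F)=0$ and makes the stated bound trivial, and the analogous remark applies to $\mathbf{L}_\F^{(n)}=0$ and $\tilde\alpha(\F)$; up to these bookkeeping remarks, the heart of the proof really is the single ``choose a translate that separates $S_1$ and $gS_2$'' trick, which is the only place the infinitude of $G$ enters.
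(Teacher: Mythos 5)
Your proof is correct and follows essentially the same route as the paper's: submultiplicativity of $\mathbf{G}_\F^{(n)}$ and $\mathbf{L}_\F^{(n)}$ (obtained by translating one optimal support off the other) followed by Fekete's lemma with its infimum characterisation. You are in fact a little more complete than the paper, which only says ``the same argument holds for $\mathbf{L}$'' (by itself yielding a bound against $\tilde\alpha(\F)$), whereas you explicitly close the gap via $\mathbf{L}_\F^{(n)}\ge\mathbf{G}_\F^{(n)}$, hence $\tilde\alpha(\F)\ge\alpha(\F)$.
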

\begin{proof}
Let $i,j\in\mathbb{N}$ and $S_i\in \binom{G}{i}$ and $S_j\in \binom{G}{j}$ be such that $\mathbf{G}^{(i)}_\F= \left|\mathcal{G}_\F^{(S_i)}\right|$ and $\mathbf{G}^{(j)}_\F= \left|\mathcal{G}_\F^{(S_j)}\right|$. We can assume that $S_i\cap S_j=\emptyset$ since these two sets are finite and for all $S$ the value of $\mathcal{G}_\F^{(S)}$ is invariant by translation of $S$. 
We have  
\begin{equation*}
\left|\mathcal{G}_\F^{(S_j\cup S_i)}\right|\le  \left|\mathcal{G}_\F^{(S_i)}\right|\cdot  \left|\mathcal{G}_\F^{(S_j)}\right|=\mathbf{G}^{(i)}_\F \cdot \mathbf{G}^{(j)}_\F 
\end{equation*}
which implies
\begin{equation*}
  \mathbf{G}^{(i+j)}_\F \le\mathbf{G}^{(i)}_\F \cdot \mathbf{G}^{(j)}_\F \,.
\end{equation*}
The sequence $\left(\mathbf{G}_\F^{(n)}\right)_{n\ge0}$ is submultiplicative. Our result is then a direct consequence of Fekete's Lemma.   
The same argument holds for $\left(\mathbf{L}_\F^{(n)}\right)_{n\ge0}$.
\end{proof}

For any subshift $X$, we let the \emph{growth of the subshift $X$} be $\alpha(X)=\alpha(X_\F)$ for any $\F$ such that $X=X_\F$.
By definition, the global complexity doesn't depend on the choice of $\F$. On the other hand, by definition, the local complexity depends on the choice of $\F$ (and given a subshift $X$ the $\F$ such that $X= X_\F$ is not necessarily unique). However, in the next section we show that if $G$ is amenable, for any set $\F$  the two associated growths are identical, that is, $\alpha(\F)=\tilde\alpha(\F)$. Thus for any subshift $X$, we can choose the most convenient $\F$ such that $X=X_\F$ and compute $\alpha(X)=\alpha(\F)=\tilde\alpha(\F)$. We will provide in Section \ref{generalbound} a way to lower bound $\tilde\alpha(\F)$ under some conditions on $\F$.

\subsection{Amenable groups and topological entropy}
We say that a sequence $(F_n)_{i\in\mathbb{N}}$ is a \emph{symmetric Følner sequence} of $G$ if
\begin{enumerate}
\item for all $i$, $F_i$ is a finite subset of $G$,
\item for all $g\in G$, $\lim\limits_{i\rightarrow\infty} \frac{|(g \cdot F_i)\Delta F_i|}{|F_i|}=0$,
\item for all $i$, $F_i= F_i ^{-1}$.
\end{enumerate}
Remember that a countable group is amenable if and only if it admits a symmetric Følner sequence.
Conditions  1. and 2. define a Følner sequence, and the existence of a Følner sequence is equivalent to $G$ being amenable.
The fact that, this is still equivalent when adding condition 3. is proven in \cite[Corollary 5.3]{symmetricFolner} and it will be more convenient for us. One easily deduces from 2. and 3. that for all finite set $S$,

\begin{equation*}
\lim\limits_{i\rightarrow\infty} \frac{|(S \cdot F_i)\Delta F_i|}{|F_i|}=0 = \lim\limits_{i\rightarrow\infty} \frac{|(F_i\cdot S)\Delta F_i|}{|F_i|}\,.
\end{equation*}

If $G$ is a countable amenable group, and $(S_n)_{n\ge0}$ is a Følner sequence, then the so-called \emph{topological entropy} of $X_\F$ is given by $h(X_\F)=\lim\limits_{n\rightarrow \infty}\frac{\log \mathcal{G}_\F^{(S_n)}}{|S_n|}$. As a direct consequence of the definition, for any subshift $X_\F$,
\begin{equation*}
h(X_\F)\ge\log \alpha(X_F)\,.
\end{equation*}
This lower bound is sufficient for our applications, since we only provide lower bounds on $\alpha(X_F)$ from which we deduce lower bounds $h(X_\F)$. Let us however recall that the equality holds between these quantities, that is,
\begin{equation*}
h(X_\F)=\log \alpha(X_F)\,.
\end{equation*}
This can be deduced from \cite[Proposition 3.3]{entropyandShearer} and from the fact that
\begin{equation*}
\alpha(X_F)=\inf_{S\subseteq G, S \text{ finite}}\left|\mathcal{G}_\F^{(S)}\right|^{1/|S|}\,
\end{equation*}
which is itself a consequence of Fekete's Lemma applied to the submultiplicativity of $\left(\mathbf{G}_\F^{(n)}\right)_{n\ge0}$.

\subsection{A combinatorial version of Shearer's inequality}
We provide here a combinatorial version of Shearer's inequality. This Lemma will be crucial in proving that the local growth and the local growth are equal over an amenable countable group. This is heavily inspired by \cite{entropyandShearer}.

Given a set $S$ and a collection of subsets  $S_1, ..., S_n\subseteq S$, we say that $S_1, ..., S_n$ is an $r$-cover of $S$ if for all $s\in S$, $s$ appears in at least $r$ of the subsets, that is,
$\forall s\in S, |\{i\in\{1,\ldots, n\}: s\in S_i\}|\ge r$ (some of the $S_i$ could be pairwise identical in which case they are counted with their multiplicity).

\begin{theorem}\label{CombinatorialShearer}
  Let $C\subseteq \mathcal{A}^{S}$ be a set of colorings of a finite set $S$ by a finite alphabet $\mathcal{A}$. Let $S_1, ..., S_t$ be an $r$-cover of $S$, then
  \begin{equation*}
  |C|\le \left(\prod_{i=1}^{t}\left|C\big|_{S_i}\right|\right)^{1/r}\,.
  \end{equation*}
\end{theorem}

The proof is a direct application of Shearer's inequality for entropy.
Remember that the entropy of a discrete random variable $X$ taking value over a set $\mathcal{X}$ and is distributed according to $p:\mathcal{X}\rightarrow[0,1]$ is given by
\begin{equation*}
H[X]=-\sum_{x\in\mathcal{X}}p(x)\log p(x)\,.
\end{equation*}

\begin{theorem}[Shearer's inequality]
  If $X_1, ..., X_d$ are random variables and $S_1, ..., S_n$ is an $r$-cover of $\{1, 2, \ldots, d\}$, then
  \begin{equation*}
H\left[(X_1,\ldots, X_d)\right]\le\frac{1}{r}\sum_{i=1}^{n}H\left[(X_j)_{j\in S_i}\right]\,.
  \end{equation*}
\end{theorem}

The following fact is a direct consequence of Jensen's inequality.
\begin{fact}\label{entropySizeSet}
Let $X$ be a discrete random variable taking value over a set $\mathcal{X}$ distributed according to $p:\mathcal{X}\rightarrow[0,1]$, then
\begin{equation*}  
H[X]\le \log |\mathcal{X}|
\end{equation*}
with equality if the distribution is uniform.
\end{fact}

We are now ready to prove our combinatorial version of Shearer's inequality.
\begin{proof}[Proof of Theorem \ref{CombinatorialShearer}]
 Let $\mathbf{C}$ be a random variable taking value over $C$ uniformly at random. For all $\sigma\in S$, let $\mathbf{X}_\sigma$ be the random variable such that $\mathbf{X}_\sigma=\mathbf{C}(\sigma)$. Shearer's inequality implies
\begin{equation*}
H[\mathbf{C}] \le\frac{1}{r}\sum_{i=1}^{t}H\left[(\mathbf{X}_\sigma)_{\sigma\in S_i}\right]\,.
\end{equation*}

Since for all $i$, $(\mathbf{X}_\sigma)_{\sigma\in S_i}$ takes value over $C\big|_{S_i}$, we can apply Fact \ref{entropySizeSet} to deduce 
\begin{equation*}
H\left[(\mathbf{X}_\sigma)_{\sigma\in S_i}\right]\le \log\left|C\big|_{S_i}\right|\,.
\end{equation*}

By Fact \ref{entropySizeSet}, we also have $\log |C|= H[\mathbf{C}]$. 
These three equations imply
\begin{equation*}
\log |C|\le\frac{1}{r}\sum_{i=1}^{t}\log\left|C\big|_{S_i}\right|\,,
\end{equation*}
which,  by removing the $\log$, implies the desired inequality.
\end{proof}

\section{Growth of locally admissible and globally admissible patterns in amenable groups}\label{growthsec}
This section is devoted to the proof of the following ``folklore'' result.
\begin{theorem}\label{extendablesamegrowth}
  For any countable amenable group $G$ and $\F\subseteq\A^+_G$, we have
  \begin{equation*}\alpha(\F)=\tilde\alpha(\F)\,.\end{equation*}
\end{theorem}
For this proof, we use a notion of $t$-extendable pattern.
Let $(g_i)_{i> 0}$ be any enumeration of $G$, that is, the $g_i$ are pairwise distinct and $G=\{g_i: i> 0\}$. For all $i>0$, we let $G_i=\{g_1,g_2\ldots,g_i \}$. For any integer $t>0$, we say that a locally admissible pattern $p\in \mathcal{L}^{(S)}_\F$ is \emph{$t$-extendable} if there exists a pattern $p'\in \mathcal{L}^{(S\cup S\cdot G_t)}_\F$ such that $p'\big|_{S}=p$. In other words, $p$ is $t$-extendable if whenever we add $\operatorname{supp}(p)\cdot G_t$ to the support of $p$, we can extend $p$ into a larger locally admissible pattern with this new support.
For any integer $t>0$ and finite set $S\subset G$, we let $E^{(S)}_t$ be the set of patterns with support $S$ that are $t$-extendable.

This notion of  $t$-extendability has two interesting properties for us.
First, by compactness, if a pattern is $t$-extendable for all $t$, then it is globally admissible which will be useful to relate the growth of $t$-extendable patterns to the growth of globally admissible patterns. Second, $t$-extendability is ``preserved by translation'', as illustrated by the following claim, which will be useful to relate the growth of $t$-extendable patterns to the growth of locally admissible patterns.
\begin{fact}\label{extendabilityIsTranslationInvariant}
For all $S\subseteq G$, $g\in G$ and $t>0$, we have the following equality
\begin{equation*}
\left|E^{(S)}_t\right| =\left|E^{(g\cdot S)}_t\right|\,.
\end{equation*}
\end{fact}
\begin{proof}
Fix $t>0$. It is enough to prove $\left|E^{(S)}_t\right|\le\left|E^{(g\cdot S)}_t\right|$ for all $S\subseteq G$ and $g\in G$.
The other direction is a direct consequence of the same inequality applied to $g\cdot S$ and $g^{-1}$.

Consider a $t$-extendable coloring $c\in E^{(S)}_t$ that can be extended to a locally admissible coloring $c'$ of $S\cup S\cdot G_t$. Let $d:g\cdot S\rightarrow\mathcal{A}$ and $d':(g\cdot S)\cup(g\cdot S\cdot G_t)\rightarrow\mathcal{A}$ be the colorings such that:
\begin{itemize}
  \item for all $x\in g\cdot S$, $d(x)= c(g^{-1}x)$,
  \item  for all $x\in (g\cdot S)\cup(g\cdot S\cdot G_t)$, $d'(x)= c'(g^{-1}x)$.
\end{itemize}
For all $x\in g\cdot S$, we have by construction $d'(x)= c'(g^{-1}x)= c(g^{-1}x)=d(x)$, that is, $d= d'\big|_{g\cdot S}$.
For the sake of contradiction, suppose that $d'$ is not locally admissible.
Then there exists a forbidden pattern $p$ that appears in $d'$.
That is, there exists $f\in G$ such that $f\cdot \operatorname{supp}(p)\subseteq(g\cdot S)\cup(g\cdot S\cdot G_t)$ and
for all $x\in\operatorname{supp}(p)$, $d'(fx)=p(x)$.
This implies that $g^{-1}f\cdot \operatorname{supp}(p)\subseteq S\cup(S\cdot G_t)$ and for all $x\in\operatorname{supp}(p)$, $c'(g^{-1}fx)=d'(fx)=p(x)$.
That is, $c'$ contains an occurrence of the forbidden pattern $p$ which is a contradiction.
Hence, $d$ is a $t$-extendable coloring of $g\cdot S$, that is, $d\in  E^{(g\cdot S)}_t$.
Moreover, each coloring such $c\in E^{(S)}_t$ corresponds to a different $d\in E^{(g\cdot S)}_t$ which implies
\begin{equation*}
\left|E^{(S)}_t\right|\le\left|E^{(g\cdot S)}_t\right|\,
\end{equation*}
as desired.
\end{proof}

We are now ready to prove the main Lemma behind Theorem \ref{extendablesamegrowth}.
\begin{lemma}\label{limToInf}
  For any finite set $S\subseteq G$ and $t\in\mathbb{N}$,
\begin{equation*}
|E^{(S)}_t|\ge (\tilde\alpha(\F))^{|S|}\,.
\end{equation*}
\end{lemma}
\begin{proof}
We first prove that an asymptotic version of this statement holds when we replace $S$ by limits over a symmetric Følner sequence.
We will then use the combinatorial version of Shearer Lemma to extend the result to any set $S$.

Since $G$ is a countable amenable group, it admits a symmetric Følner sequence $(F_i)_{i\ge0}$.
By definition, for any $i$, the restriction to $F_i$ of a locally admissible coloring of $F_i \cup (F_i\cdot G_t)$ yields a $t$-extendable coloring of $F_i$.
Moreover, any $t$-extendable coloring of $F_i$ can be extended in at most $|\mathcal{A}|^{|(F_i\cdot G_t)\setminus F_i|}$ different locally admissible coloring of $F_i \cup (F_i\cdot G_t)$.
We get
\begin{equation*}
\left|E^{(F_i)}_t\right|
\ge \frac{|\mathcal{L}^{(F_i \cup (F_i\cdot G_t))}|}{|\mathcal{A}|^{|(F_i\cdot G_t)\setminus F_i|}}
\ge\frac{|\mathcal{L}^{(F_i)}|}{|\mathcal{A}|^{|(F_i\cdot G_t)\Delta F_i|}}\,.
\end{equation*}
Since $G_t$ is a finite set and $(F_i)_{i\ge0}$ is a symmetric Følner sequence, for all $t>0$,
\begin{equation*}
\lim_{i\rightarrow \infty} \frac{|( F_i\cdot G_t)\Delta F_i|}{|F_i|}=0\,.
\end{equation*}
This implies
\begin{equation}
\lim_{i\rightarrow\infty}\left|E^{(F_i)}_t\right|^{1/|F_i|}\ge\lim_{i\rightarrow\infty}\left|\mathcal{L}^{(F_i)}\right|^{1/|F_i|}
= \tilde\alpha(\F)\,.\label{growthOfFølner}
\end{equation}

We are now ready to prove our Lemma by contradiction.
For the sake of contradiction, suppose that there exists $t>0$, $S\subseteq G$ and $\varepsilon>0$ such that
\begin{equation}\label{contradiction}
|E^{(S)}_t|\le ((1-\varepsilon)\tilde\alpha(\F))^{|S|}\,.
\end{equation}
Let $t'$ be such that $S^{-1}\cdot S\cdot (\{\mathbf{1}_G\}\cup G_t)\subseteq G_{t'}$ (the set on the left-hand part is finite, so there exists such a $t'$).
Let $(X_g)_{g\in F_n S^{-1}}$ be the family such that for all $g\in  F_n S^{-1}$,
\begin{equation*}
X_g = (g\cdot S)\cap F_n\,.
\end{equation*}
The family $(X_g)_{g\in F_n S^{-1}}$ is an $r$-cover of $F_n$ with $r=|S|$.
Indeed, for $f\in F_n$ the condition $f\in g\cdot S$ is equivalent to  $g\in f\cdot S^{-1}$ which is fulfilled by exactly $|S|$ elements of $F_n S^{-1}$.
Theorem \ref{CombinatorialShearer} implies
\begin{equation}\label{eqshearerapplied}
  \left|E_{t'}^{(F_n)}\right|\le \left(\prod_{g\in F_n S^{-1}}\left|E_{t'}^{(F_n)}\big|_{X_g}\right|\right)^{1/|S|}\,.
\end{equation}

Let $c\in E_{t'}^{(F_n)}$ and $g\in F_n S^{-1}$. By definition, $c$ is a locally admissible coloring of $F_n$ that can be extended to a locally admissible coloring of $F_n\cup F_nG_{t'}$. The restriction $c\big|_{X_g}$ is also a restriction of the same locally admissible coloring of $F_n\cup F_nG_{t'}$.
Since  $g\in F_n S^{-1}$,
\begin{equation*}
F_nG_{t'} \supseteq F_nS^{-1}\cdot S\cdot (\{\mathbf{1}_G\}\cup G_t)\supseteq g \cdot S\cdot (\{\mathbf{1}_G\}\cup G_t)= gS\cup gSG_t\,.
\end{equation*}
So $c\big|_{X_g}$ can be extended to a locally admissible coloring of $gS\cup gSG_t$.
In other words, every $c\in E_{t'}^{(F_n)}\big|_{X_g}$ can be extended to a coloring of $gS$ that can itself be extended into a coloring of $gS\cup gSG_t$. Hence,
\begin{equation*}
\left|E^{(F_n)}_{t'}\big|_{X_g}\right|
\le \left|E^{(g\cdot S)}_{t}\right|
=\left|E^{(S)}_{t}\right|
\le ((1-\varepsilon)\tilde\alpha(\F))^{|S|} \,,
\end{equation*}
where the two last inequalities are respectively consequences of Fact \ref{extendabilityIsTranslationInvariant} and of equation \eqref{contradiction}.
Plugging this inequality in \eqref{eqshearerapplied}, we obtain
\begin{equation}\label{entropyalmostdone}
\left|E_{t'}^{(F_n)}\right|\le\left(\prod_{g\in F_n S^{-1}} ((1-\varepsilon)\tilde\alpha(\F))^{|S|}\right)^{1/|S|}
\le ((1-\varepsilon)\tilde\alpha(\F))^{|F_n S^{-1}|}\,.
\end{equation}
If $(1-\varepsilon)\tilde\alpha(\F)<1$, then $|E^{(F_n)}_{t'}|=0$ which implies $\tilde\alpha(\F)=0$ and concludes the proof. We can assume in the following that $(1-\varepsilon)\tilde\alpha(\F)\ge1$.
Moreover, since $(F_n)_{n\in \mathbb{N}}$ is a Følner sequence,
\begin{align*}
\lim_{n\rightarrow\infty}\frac{|F_n\cdot S^{-1}|}{|F_n|}\le 1+\lim_{n\rightarrow\infty}\frac{|(F_n\cdot S^{-1})\Delta F_n|}{|F_n|}
  =1\,.
\end{align*}
Fix some $0<\varepsilon'<\varepsilon$, then $\frac{1-\varepsilon'}{1-\varepsilon}>1$, and there exists $\varepsilon''>0$
such that $\tilde\alpha(\F)^{\varepsilon''}<\frac{1-\varepsilon'}{1-\varepsilon}$.
Moreover, for $n$ large enough, $|F_n S^{-1}|\le |F_n|(1+\varepsilon'')$.
Together with equation \eqref{entropyalmostdone} this implies that, for $n$ large enough,
\begin{align*}
\left|E_{t'}^{(F_n)}\right|^{1/|F_n|}
\le ((1-\varepsilon)\tilde\alpha(\F))^{(1+\varepsilon'')}
< \tilde\alpha(\F) \left((1-\varepsilon) \tilde\alpha(\F)^{\varepsilon''}\right)<\tilde\alpha(\F)(1-\varepsilon')\,.
\end{align*}
That is,
\begin{align*}
\lim_{n\rightarrow \infty}\left|E_{t'}^{(F_n)}\right|^{1/|F_n|}\le\tilde\alpha(\F)(1-\varepsilon')<\tilde\alpha(\F)
\end{align*}
which contradicts \eqref{growthOfFølner} and finishes our proof.
\end{proof}

We are now ready to prove our theorem.

\begin{proof}[Proof of Theorem \ref{extendablesamegrowth}]
For all $t$, any globally admissible pattern is $t$-extendable and any $(t+1)$-extendable pattern is $t$-extendable, thus for all $S\subset G$,
\begin{equation}
\mathcal{G}_\F^{(S)}\subseteq\ldots \subseteq E^{(S)}_{t+1}\subseteq E^{(S)}_t\subseteq\ldots\subseteq  E^{(S)}_1\subseteq \mathcal{L}_\F^{(S)}\,.
\end{equation}
Moreover, by compactness $\mathcal{G}_\F^{(S)}=\bigcap\limits_{t\ge0}E^{(S)}_t$. Since all the $E^{(S)}_t$ are finite, it implies that for all $S\in G$, there exists $t_S$ such that,
\begin{equation*}
\mathcal{G}_\F^{(S)}=\ldots = E^{(S)}_{t_S}\subseteq\ldots\subseteq  E^{(S)}_1\subseteq \mathcal{L}_\F^{(S)}\,.
\end{equation*}
From Lemma \ref{limToInf}, for all $S$,
$\left|\mathcal{G}_\F^{(S)}\right|= \left|E^{(S)}_{t_S}\right|\ge \tilde\alpha(\F)^{|S|}\,.$
Hence, for all $n$,
\begin{equation*}
\mathbf{G}_\F^{(n)}\ge\tilde\alpha(\F)^n\,,
\end{equation*}
which finally implies
\begin{equation*}
\alpha(\F)=\lim\limits_{n\rightarrow\infty}\left(\mathbf{G}_\F^{(n)}\right)^{1/n}\ge\tilde\alpha(\F)
\end{equation*}
as desired.  
\end{proof}

The amenability condition in Theorem \ref{extendablesamegrowth} is necessary. In deed, we prove in Theorem \ref{amenableNecessary} that for every countable non-amenable group there exists a set of forbidden patterns such that $\tilde\alpha(\mathcal{F})>\alpha(\mathcal{F})$. 
This gives an alternative characterization of amenable countable groups: a countable group is amenable if and only if for any set of forbidden patterns $\mathcal{F}$, we have 
$\alpha(\mathcal{F})=\tilde\alpha(\mathcal{F})\,.$ 
The lower bounds on $\tilde\alpha(\mathcal{F})$ that we provide in the remainder of this article are still meaningful for non-amenable group since, by compactness, $\tilde\alpha(\mathcal{F})\ge1$ implies $\alpha(\mathcal{F})\ge1$.

\begin{theorem}\label{amenableNecessary}
Let $G$ be a countable non-amenable group. Then there exists a set of forbidden patterns $\mathcal{F}$ such that 
$$\tilde\alpha(\mathcal{F})>\alpha(\mathcal{F})\,.$$
\end{theorem}
\begin{proof}
The negation of the existence of a Følner sequence of $G$ implies that there exists a finite subset $S$ of $G$ and $\varepsilon>0$ such that for all $F$, there exists $s\in S$,
$\frac{|sF\Delta F|}{|F|}>\varepsilon\,,$
which implies $|(sF)\setminus F|>\frac{\varepsilon}{2} |F|$.
Let $S'=S\cup\{\mathbf{1}_G\}$ where $\mathbf{1}_G$ is the neutral element of $(G,\cdot)$, then for all $F$, 
$$|(S'\cdot F)\setminus F|=|S\cdot F\setminus F|>\frac{\varepsilon}{2} |F|\,.$$ Since $S'$ is finite, we have for all $F$,
\begin{equation}\label{sizeDiff}  
|\{f\in F:S'\cdot f\not\subseteq F\}|>\frac{\varepsilon}{2|S'|} |F|\,.
\end{equation}

We are now ready to construct our set of forbidden patterns. 
Consider the alphabet $\mathcal{A}=\{0,1\}$ and the set of forbidden patterns
\begin{equation*}
\mathcal{F}= \left\{p\in\mathcal{A}^{S'} : p(\mathbf{1}_G)=0\right\}\,.
\end{equation*}
The only configuration in the shift $\mathcal{X}_\mathcal{F}$ is the constant configuration where every element receives $1$. Thus, $\alpha(\mathcal{F})=1$.
On the other hand, for every finite subset $F\subseteq G$, let $\mathcal{B}:=\{f\in F :S'\cdot f\subseteq F\}$. Any coloring $c$ of $F$ such that for all $x\in \mathcal{B}$, $c(x)=1$ are locally admissible, so there are at least  $2^{|F-\mathcal{B}|}>2^{|F|\varepsilon/(2|S'|)}$ locally admissible coloring of $F$ (where the inequality is a direct consequence of equation \eqref{sizeDiff}). 
This implies $\mathbf{L}_\F^{(n)}> 2^{n\varepsilon/(2|S'|)}$, that is,
$$\tilde\alpha(\mathcal{F})> 2^{\varepsilon/(2|S'|)}>1=\alpha(\mathcal{F})\,,$$
as desired.
\end{proof}

\section{Lower bound on the growth rate in the general case}\label{generalbound}
Now that we showed that the local growth is identical to the global growth, we can provide our main result to lower-bound the local growth.

\begin{lemma}\label{lemmacounting}
  Let $\F\subseteq\A^+_G$ and $\beta$ be a positive real number such that
  \begin{equation}\label{thmHyp}
|\A|-\sum_{f\in\F}|f|\beta^{1-|f|}\ge \beta\,.
  \end{equation}
  Then for all finite sets $S\in G$ and $s\in G\setminus S$,
\begin{equation*}
 |\mathcal{L}_\F^{(S\cup\{s\})}|\ge\beta|\mathcal{L}_\F^{(S)}|\,.
\end{equation*}
\end{lemma}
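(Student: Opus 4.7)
The plan is to prove the lemma by induction on $|S|$ using a direct counting argument. The restriction map $q \mapsto (q|_S, q(s))$ is an injection from $\mathcal{L}_\F^{(S \cup \{s\})}$ into $\mathcal{L}_\F^{(S)} \times \A$, whose image consists of exactly those pairs $(p,a)$ for which extending $p$ by the value $a$ at $s$ yields a locally admissible pattern on $S \cup \{s\}$. Letting $B$ be the complementary ``bad'' set, we have $|\mathcal{L}_\F^{(S \cup \{s\})}| = |\A| \cdot |\mathcal{L}_\F^{(S)}| - |B|$, so the lemma reduces to establishing $|B| \leq (|\A|-\beta)|\mathcal{L}_\F^{(S)}|$.

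To control $|B|$, I would associate to each bad pair $(p,a)$ a pair $(f,t)$ with $f \in \F$ and $t \in \operatorname{supp}(f)$ witnessing the violation: because $p$ itself is already locally admissible on $S$, any occurrence of a forbidden pattern in the extension must involve $s$, so there exist $f \in \F$ and $t \in \operatorname{supp}(f)$ such that at the translate $g := s t^{-1}$ the extension $p \cdot a$ realizes $f$. For a fixed $(f,t)$, the letter $a = f(t)$ is forced, and the values of $p$ on $T_{f,t} := g(\operatorname{supp}(f) \setminus \{t\}) \subseteq S$ are prescribed by $f$. Hence the number of bad pairs attributable to $(f,t)$ is at most the number of $p \in \mathcal{L}_\F^{(S)}$ with a prescribed restriction to $T_{f,t}$, and the injection $p \mapsto p|_{S \setminus T_{f,t}}$ bounds this quantity above by $|\mathcal{L}_\F^{(S \setminus T_{f,t})}|$ (and it is zero whenever $T_{f,t} \not\subseteq S$, in particular when $|f| - 1 > |S|$).

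The induction intervenes here: applying the inductive hypothesis $|f|-1$ times, one for each element added while building $S$ up from $S \setminus T_{f,t}$ one element at a time, yields $|\mathcal{L}_\F^{(S)}| \geq \beta^{|f|-1}|\mathcal{L}_\F^{(S \setminus T_{f,t})}|$, so each $(f,t)$ contributes at most $\beta^{1-|f|}|\mathcal{L}_\F^{(S)}|$ bad pairs. Summing over the $|f|$ choices of $t$ per $f \in \F$ and invoking the hypothesis~\eqref{thmHyp} gives $|B| \leq \sum_{f \in \F}|f|\beta^{1-|f|}\cdot|\mathcal{L}_\F^{(S)}| \leq (|\A|-\beta)|\mathcal{L}_\F^{(S)}|$, which closes the induction. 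The main subtlety is purely bookkeeping: one must check that \emph{every} bad pair is captured by at least one $(f,t)$ with $T_{f,t} \subseteq S$, and that the intermediate extensions used to derive $|\mathcal{L}_\F^{(S)}| \geq \beta^{|f|-1}|\mathcal{L}_\F^{(S \setminus T_{f,t})}|$ all take place on sets of size strictly less than $|S|$, so no circularity arises. The base case $|S|=0$ is handled uniformly by the same argument, since only singleton forbidden patterns satisfy $T_{f,t} \subseteq \emptyset$ and for those no induction is needed.
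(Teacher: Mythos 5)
Your proposal is correct and follows essentially the same route as the paper: induct on $|S|$, count the $|\A|\cdot|\mathcal{L}_\F^{(S)}|$ candidate extensions, charge each bad extension to an occurrence of some $f\in\F$ covering $s$ (your parametrization by $t\in\operatorname{supp}(f)$ with $g=st^{-1}$ is the same as the paper's parametrization by the at most $|f|$ translates $g$ with $s\in g\operatorname{supp}(f)$), and bound each contribution by $|\mathcal{L}_\F^{(S\setminus T_{f,t})}|\le\beta^{1-|f|}|\mathcal{L}_\F^{(S)}|$ via the iterated induction hypothesis. The bookkeeping points you flag (coverage of every bad pair, no circularity, the base case) are all handled correctly.
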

\begin{proof}
We proceed by induction on $S$. Let $S$ be such that for all $X\subseteq S$ and for all $x\in S\setminus X$
\begin{equation*}
 \left|\mathcal{L}_\F^{(X\cup\{x\})}\right|\ge\beta\left|\mathcal{L}_\F^{(X)}\right|\,.
\end{equation*}
For all $R\subseteq S$ we can use our hypothesis inductively and we obtain
\begin{equation}\label{IHPmain}
\left|\mathcal{L}_\F^{(S\setminus R)}\right|\le\frac{\left|\mathcal{L}_\F^{(S)}\right|}{\beta^{|R|}}\,.
\end{equation}
An \emph{extension} of a pattern $p\in \mathcal{L}_\F^{(S)}$ is a pattern $p'\in \A^{S\cup\{s\}}$, such that $p'|_{S}=p$. The number of extension is $|\A|\cdot|\mathcal{L}_\F^{(S)}|$. Let $B$ be the set of extensions that are not locally admissible, then
\begin{equation}\label{forbidenextansions}
  |\mathcal{L}_\F^{(S\cup\{s\})}|\ge |\A|\cdot|\mathcal{L}_\F^{(S)}| - |B|\,.
\end{equation}
For all $f\in \F$, let $B_f$ be the set of extensions $p$ such that $f$ appears in $p$.
Then $B=\bigcup_{f\in \F} B_f$ and
\begin{equation}\label{sizeofB}
|B|\le \sum_{f\in \F}|B_f|\,.
  \end{equation}
For any $f\in \F$ and any $p\in B_f$, the pattern $f$ appears in $p$. For any $g\in G$, we let $B_{f,g}$ be the set of extensions such that $f$ appears at position $g$. 

Fix $g\in G$ and let $T=\{gx: x\in \operatorname{supp}(f)\}$. Since  $p|_{S}\in \mathcal{L}_\F^{(S)}$, we know that $s\in T$.
If $f$ occurs at position $g$ in a pattern $p$, then $p|_{T}$ is uniquely determined by $f$ and $g$.
Moreover, for all $p\in B_{f,g}$, we have $p|_{(S\cup \{s\})\setminus T}\in\mathcal{L}_\F^{((S\cup \{s\})\setminus T)}$, hence $|B_{f,g}|\le|\mathcal{L}_\F^{((S\cup \{s\})\setminus T)}|$. By equation \eqref{IHPmain},
\begin{equation*}|B_{f,g}|\le \frac{|\mathcal{L}_\F^{(S)}|}{\beta^{|T\setminus\{s\}|}}
\le \frac{|\mathcal{L}_\F^{(S)}|}{\beta^{|f|-1}}\,.\end{equation*}
There are at most $|T|=|f|$ possible values of $g$ such that $s\in T$. It implies that
\begin{equation}\label{boundonBF}
|B_f|\le |f|\frac{|\mathcal{L}_\F^{(S)}|}{\beta^{|f|-1}}\,.
\end{equation}
We can finally use this together with equations \eqref{forbidenextansions} and \eqref{sizeofB} to obtain
\begin{equation*}
  |\mathcal{L}_\F^{(S\cup\{s\})}|\ge |\A|\cdot|\mathcal{L}_\F^{(S)}| - \sum_{f\in \F}|B_f|
  \ge|\mathcal{L}_\F^{(S)}| \left(|\A|-\sum_{f\in \F} |f|\beta^{1-|f|}\right)\,.
\end{equation*}
We can finally apply our theorem hypothesis \eqref{thmHyp} to deduce $|\mathcal{L}_\F^{(S\cup\{s\})}|\ge \beta|\mathcal{L}_\F^{(S)}|$ as desired.
\end{proof}

We showed that adding an element to the support multiplies the number of locally admissible patterns by $\beta$. Moreover, the empty pattern is always locally admissible, so we deduce the following corollary.
\begin{corollary}\label{growthofL}
  Let $\F\subseteq\A^+_G$ and $\beta$ be a positive real number such that
  \begin{equation*}|\A|-\sum_{f\in\F}|f|\beta^{1-|f|}\ge \beta.\end{equation*}
  Then for all finite sets $S\in G$,
 \begin{equation*} \mathbf{L}^{(S)}_\F\ge\beta^{|S|}\,.\end{equation*}
\end{corollary}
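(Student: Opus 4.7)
The plan is to obtain the bound by a direct induction on $|S|$, using Lemma \ref{lemmacounting} as the inductive step. The corollary is essentially a telescoping of that lemma, starting from the trivial fact that the empty pattern is always locally admissible.

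For the base case $S = \emptyset$, observe that the only pattern with empty support is the empty function, and it vacuously avoids every $f \in \F$ since each $f$ has non-empty support and therefore cannot appear in the empty pattern. Hence $|\mathcal{L}_\F^{(\emptyset)}| = 1 = \beta^{0}$, which is the required inequality for $|S| = 0$.

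For the inductive step, suppose the bound holds for every finite subset of $G$ of size strictly less than $n$, and let $S \subseteq G$ be finite with $|S| = n \geq 1$. Pick any $s \in S$ and write $S = S' \cup \{s\}$ with $S' = S \setminus \{s\}$. By the inductive hypothesis $|\mathcal{L}_\F^{(S')}| \geq \beta^{n-1}$, and by Lemma \ref{lemmacounting} applied to $S'$ and $s$ we have $|\mathcal{L}_\F^{(S)}| \geq \beta\, |\mathcal{L}_\F^{(S')}|$. Multiplying these two inequalities yields $|\mathcal{L}_\F^{(S)}| \geq \beta^{n} = \beta^{|S|}$, which is what we wanted.

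There is no genuine obstacle here: the whole content of the corollary already sits in Lemma \ref{lemmacounting}, and the corollary merely iterates it over an (arbitrary) enumeration of the elements of $S$. In particular, the minimum over $S \in \binom{G}{n}$ in the definition of $\mathbf{L}^{(n)}_\F$ plays no role in the argument, since the bound $\beta^{|S|}$ holds uniformly for every finite $S$.
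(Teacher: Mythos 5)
Your proof is correct and follows exactly the paper's intended argument: the base case is the locally admissible empty pattern, and the inductive step is Lemma \ref{lemmacounting} applied one element at a time. Nothing further is needed.
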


In particular, the previous Corollary implies that, for any finite support, there exists at least one locally admissible configuration. The usual compactness argument immediately implies the following Corollary.
\begin{corollary}
Let $G$ be a countable group, $\F\subseteq\A^+_G$ and $\beta$ be a positive real number such that
  \begin{equation*}|\A|-\sum_{f\in\F}|f|\beta^{1-|f|}\ge \beta.\end{equation*}
  Then
  \begin{equation*}X_\F\not=\emptyset\,.\end{equation*}
\end{corollary}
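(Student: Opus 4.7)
The plan is to derive this corollary from the preceding Corollary \ref{growthofL} via the standard compactness argument for subshifts. Corollary \ref{growthofL} gives $\mathbf{L}^{(S)}_\F \geq \beta^{|S|}$ for every finite $S\subseteq G$. Since this lower-bounds a non-negative integer by a strictly positive real, we in fact get $\mathbf{L}^{(S)}_\F \geq 1$, i.e.\ there exists at least one locally admissible pattern on every finite support. The only remaining task is to glue these finitary witnesses into a genuine element of $X_\F$.

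For this I would use the product topology on $\A^G$, with $\A$ discrete; by Tychonoff's theorem $\A^G$ is compact. Fix an enumeration $G=\{g_1,g_2,\ldots\}$ (using countability) and, for each $n\geq 1$, set
$$Z_n = \left\{x\in \A^G : x|_{\{g_1,\ldots,g_n\}} \in \mathcal{L}^{(\{g_1,\ldots,g_n\})}_\F\right\}.$$
Each $Z_n$ is a finite union of cylinder sets, hence closed. It is nonempty, since by the preceding paragraph we may pick some locally admissible pattern on $\{g_1,\ldots,g_n\}$ and extend it arbitrarily to all of $G$. And $Z_{n+1}\subseteq Z_n$ because restricting a locally admissible pattern to a smaller support clearly preserves local admissibility.

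Applying the finite intersection property in the compact space $\A^G$ to the nested sequence $(Z_n)_{n\geq 1}$ of nonempty closed sets, I would pick $x\in\bigcap_{n\geq 1}Z_n$ and check $x\in X_\F$. Indeed, if some $f\in\F$ appeared in $x$ at a position $g$, then $g\cdot\operatorname{supp}(f)$ is a finite subset of $G$, contained in $\{g_1,\ldots,g_N\}$ for some $N$, which would make $x|_{\{g_1,\ldots,g_N\}}$ fail local admissibility and contradict $x\in Z_N$. There is essentially no obstacle in this argument: the combinatorial content was already extracted in Lemma \ref{lemmacounting} and Corollary \ref{growthofL}, and the remainder is a template application of compactness of $\A^G$.
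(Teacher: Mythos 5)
Your argument is correct and is precisely the route the paper takes: it derives nonemptiness of every $\mathcal{L}^{(S)}_\F$ from Corollary \ref{growthofL} and then invokes ``the usual compacity argument,'' which is exactly the Tychonoff/nested-closed-sets reasoning you spell out. No discrepancies.
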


If instead of a simple compactness argument, we apply Theorem \ref{extendablesamegrowth} we obtain a lower bound on the global complexity of the subshift.
Recall that the entropy of any subshift $X$ over a countable amenable group $G$ is given by $h(X)=\log \alpha(X)$.

\begin{theorem}\label{mainth}
  Let $G$ be a countable amenable group, $\F\subseteq\A^+_G$ and $\beta$ be a positive real number such that
  \begin{equation*}|\A|-\sum_{f\in\F}|f|\beta^{1-|f|}\ge \beta.\end{equation*}
  Then \begin{equation*}\alpha(X_\F)\ge \beta \quad  \quad  \text{ and }\quad  \quad  h(X_\F)\ge\log \beta\,.\end{equation*}
\end{theorem}
\begin{proof}
Corollary \ref{growthofL} implies that $\mathbf{L}^{(n)}_\F\ge\beta^{n}$ for all $n\ge0$. Hence,
$\tilde\alpha(\F)\ge\beta$ and Theorem \ref{extendablesamegrowth} implies $\alpha(\F)\ge\beta$.
\end{proof}

\section{Applications}
It is hard to compare our conditions to the conditions given in \cite{abb2019} and \cite{Bernshteyn}. It seems that in the most general context none of them is weaker than the other one. However, in applications, our condition seems to be more general and is easier to optimize. In particular, we obtain better bounds on all of their applications. Lemma 2.2 of \cite{abb2019} is asymmetric in the sense that they associate a different value $x(g)$ to each vertex. It does not seem to be helpful, because of the symmetries of groups. Although it seems possible to prove an asymmetric version of Theorem \ref{mainth} where a different value $\beta(v)$ is associated to each vertex $v$, it is not necessary for our applications.

\subsection{Strongly aperiodic shift}
A configuration $X\in A^G$ has \emph{period} $g$, if for all $x\in G$, $X(x)= X(gx)$. A configuration is \emph{strongly aperiodic} if its only period is $1_G$ and a subshift is strongly aperiodic if all of its configurations are strongly aperiodic. We refer the reader to \cite{abb2019} for more context about strongly aperiodic shifts.

Let $G$ be an infinite countable group and let $(s_i)_{i\ge0}$ be an enumeration of the elements of $G$ such that $s_0=1_G$.
Let $(T_i)_{i\ge1}$ be a sequence of finite subsets of $G$ such that for every $i\ge1$, $T_i\cap \{s_i \cdot x: x\in T_i\} = \emptyset$ and
$|T_i|=C\cdot i$, where $C$ is a constant to be defined later. We can always find such a sequence, since $G$ is infinite.
For all $i\ge1$, we let $\mathcal{P}_i$ be the set of patterns of support $ T_i\cup\{s_i \cdot x: x\in T_i\}$ such that for all $f\in \mathcal{P}_i$, and all $t\in T_i$, $f(t)=f(s_i\cdot t)$. Finally, we let $\mathcal{P}= \bigcup_{i\ge1}\mathcal{P}_i$.

A configuration $x:G\rightarrow \A$ that avoids $\mathcal{P}$ is strongly aperiodic. Indeed, if $x$ has period $p$ then $x$ contains an occurrence of at least a pattern from $\mathcal{P}_i$ with $i$ such that $s_i=p$ which contradicts the definition of $x$.
In \cite[Theorem 2.4]{abb2019}, they deduce from their criterion that if $C\ge17$, then $X_\mathcal{P}$ is non-empty. It implies the existence of a strongly aperiodic subshift over any countable group.

It is easy to see that for all $i\ge1$, $|\mathcal{P}_i|\le 2^{Ci}$ and for all $f\in \mathcal{P}_i$, $|f|=2Ci$. If we can find $\beta$ such that
\begin{equation}\label{eqtosatisfy}
2-\sum_{i\ge1}2Ci \cdot2^{Ci} \cdot\beta^{1-2Ci}\ge \beta
\end{equation}
then we can apply Theorem \ref{mainth}. If $\beta> \sqrt{2}$ it is equivalent to
\begin{equation*}
2- \frac{2^{1+C}\beta^{1+2C}C}{(\beta^{2C}-2^C)^2}\ge\beta
\end{equation*}
which
holds for $C=11$ and $\beta = 1.9$. A more careful analysis implies that there exists such a $\beta$ for all $C\ge11$. In fact, we can show the following stronger result.
\begin{theorem}\label{alsoAperiodic}
  Let $G$ be a countable group, $\F\subseteq\A^+_G$ and $\beta$ be a positive real number such that
  \begin{equation*}|\A|-\sum_{f\in\F}|f|\beta^{1-|f|}>\beta.\end{equation*}
  Then there exists a strongly aperiodic subshift $X$ avoiding $\F$. Moreover, if $G$ is amenable, we have  a strongly aperiodic subshift $X$ avoiding $\F$ and such that
  \begin{equation*}
  \alpha(X)\ge \beta\,.
  \end{equation*}
\end{theorem}
The condition is similar to the condition given in Theorem \ref{mainth}, the main difference being that we require a strict inequality. It means that if we remove some small enough quantity to the left-hand side of the inequality, the inequality still holds.

We defined the set $\mathcal{P}$ of patterns in such a way that any configuration that avoids $\mathcal{P}$ is strongly aperiodic. So we only need to forbid $\mathcal{F}\cup\mathcal{P}$.
In particular, if we add $\mathcal{P}$ to the set of forbidden patterns and if $\beta>\sqrt{2}$ we can choose $C$ marge enough such that the inequality remains strict and in this case, this theorem is a direct corollary of Theorem \ref{mainth}. The case $\beta\le\sqrt{2}$ is slightly more complicated, but it is a direct consequence of the following lemma. 
\begin{lemma}
  Let $\F\subseteq\A^+_G$ and $\beta>1$ be a real number such that
  \begin{equation}\label{thmHyp2}
|\A|-\sum_{f\in\F}|f|\beta^{1-|f|}> \beta\,.
  \end{equation}
  If $C$ is large enough then for all finite set $S\in G$ and $s\in G\setminus S$,
 \begin{equation*} |\mathcal{L}_{\F\cup \mathcal{P}}^{(S\cup\{s\})}|\ge\beta|\mathcal{L}_{\F\cup \mathcal{P}}^{(S)}|\,.\end{equation*}
\end{lemma}
The proof is almost identical to the proof of Lemma \ref{lemmacounting}, except that we also need to forbid $\mathcal{P}$ and we use an extra trick to forbid $\mathcal{P}$ ``more efficiently''.
For any set $S\subseteq G$ and $g\in G$, we let $g\cdot S= \{g\cdot s: s\in S\}$.
\begin{proof}
  We proceed by induction on $S$. Let $S$ be such that for all $X\subseteq S$ and for all $x\in G\setminus X$
\begin{equation*} \left|\mathcal{L}_{\F\cup \mathcal{P}}^{(X\cup\{x\})}\right|\ge\beta\left|\mathcal{L}_{\F\cup \mathcal{P}}^{(X)}\right|\,.\end{equation*}
For any $R\subseteq S$, we can use our hypothesis inductively, and we obtain
\begin{equation}\label{IHPmain2}
\left|\mathcal{L}_{\F\cup \mathcal{P}}^{(S\setminus R)}\right|\le\frac{\left|\mathcal{L}_{\F\cup \mathcal{P}}^{(S)}\right|}{\beta^{|R|}}\,.
\end{equation}
An \emph{extension} of a pattern $p\in \mathcal{L}_{\F\cup \mathcal{P}}^{(S)}$ is a pattern $p'\in \A^{S\cup\{s\}}$, such that $p'|_{S}=p$. The number of extension is $|\A|\cdot|\mathcal{L}_{\F\cup \mathcal{P}}^{(S)}|$. Let $B$ be the set of extensions that are not locally admissible, then
\begin{equation}\label{forbidenextansions2}
  |\mathcal{L}_{\F\cup \mathcal{P}}^{(S\cup\{s\})}|\ge |\A|\cdot|\mathcal{L}_{\F\cup \mathcal{P}}^{(S)}| - |B|\,.
\end{equation}
For all $f\in \F$, let $B_f$ be the set of extension $p$ such that $f$ appears in $p$.
For all $i\ge1$, let $B'_i$ be the set of extensions $p$ such that there exists $f\in \mathcal{P}_i$ and $f$ appears in $p$.
Then $B=\bigcup_{f\in \F} B_f\cup \bigcup_{i\ge1} B'_i$ and
\begin{equation}\label{sizeofB2}
|B|\le \sum_{f\in \F}|B_f|+\sum_{i\ge1} |B'_i|\,.
  \end{equation}
  With the exact same argument as in Lemma \ref{lemmacounting}, we can show that for all $f\in \F$
\begin{equation*}
|B_f|\le |f|\frac{|\mathcal{L}_{\F\cup \mathcal{P}}^{(S)}|}{\beta^{|f|-1}}\,.
\end{equation*}

For all $c\in B'_i$, there exists $g\in G$ such that for all $t\in T_i$, $c(g\cdot t)= c(g\cdot s_i\cdot t)$ and $s\in g\cdot T_i$ or $s\in  g\cdot s_i\cdot T_i$. If $s\in g\cdot T_i$ (resp., $  g\cdot s_i\cdot T_i$), then $c$ is uniquely determined by $g$ and $c|_{g\cdot s_i\cdot T_i}$ (resp., $c|_{s_i\cdot T_i}$) which belongs to $\mathcal{L}_{\F\cup \mathcal{P}}^{(S\cup\{s\}\setminus(g\cdot s_i\cdot T_i))}$ (resp., $\mathcal{L}_{\F\cup \mathcal{P}}^{(S\cup\{s\}\setminus(s_i\cdot T_i))}$). Since there are $2\cdot |T_i|= 2iC$ possible choices for $g$, we can use equation \eqref{IHPmain2} to obtain
\begin{equation*}|B'_i|\le 2iC \frac{|\mathcal{L}_{\F\cup \mathcal{P}}^{(S)}|}{\beta^{|T_i|-1}}\le 2iC \frac{|\mathcal{L}_{\F\cup \mathcal{P}}^{(S)}|}{\beta^{iC-1}}\,.\end{equation*}

We can use the two previous bounds together with equations \eqref{forbidenextansions2} and \eqref{sizeofB2} to obtain
\begin{align*}
  |\mathcal{L}_{\F\cup \mathcal{P}}^{(S\cup\{s\})}|
  &\ge |\A|\cdot|\mathcal{L}_{\F\cup \mathcal{P}}^{(S)}| - \sum_{f\in \F}|B_f|-\sum_{i\ge1} |B'_i|\\
  &\ge|\mathcal{L}_{\F\cup \mathcal{P}}^{(S)}| \left(|\A|-\sum_{f\in \F} |f|\beta^{1-|f|}-\sum_{i\ge1} \frac{2iC}{\beta^{iC-1}}\right) \,.
\end{align*}
Since $\beta>1$, $\sum_{i\ge1} \frac{2iC}{\beta^{iC-1}}=\frac{2\beta^{C+1}C}{(\beta^C-1)^2}$ and $\lim\limits_{C\rightarrow\infty}\frac{2\beta^{C+1}C}{(\beta^C-1)^2}=0$, so  we can make $\frac{2\beta^{C+1}C}{(\beta^C-1)^2}$ arbitrarily small by choosing $C$ large enough ($C$ only depends on $|A|$, $\F$ and $\beta$).
Together with hypothesis \eqref{thmHyp2}, this implies that there exists $C$ such that $|\A|-\sum_{f\in \F} |f|\beta^{1-|f|}-\sum_{i\ge1} \frac{2iC}{\beta^{iC-1}}>\beta$. Using this in the previous equation yields
\begin{equation*}
|\mathcal{L}_{\F\cup \mathcal{P}}^{(S\cup\{s\})}|\ge|\mathcal{L}_{\F\cup \mathcal{P}}^{(S)}| \beta
\end{equation*}
as desired.
\end{proof}

Applying Theorem \ref{alsoAperiodic} to $\F=\emptyset$ yields the following corollary.
\begin{corollary}
For all infinite countable amenable group $G$ and all $\varepsilon>0$, there exists a subshift $X\subseteq \{0,1\}^G$ such that $X$ is strongly aperiodic and $\alpha(X)\ge 2-\varepsilon$ (or equivalently $h(X)\ge\log (2-\varepsilon)$).
\end{corollary}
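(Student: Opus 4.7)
The plan is to apply Theorem \ref{alsoAperiodic} directly with the empty set of forbidden patterns $\F = \emptyset$ and the binary alphabet $\A = \{0,1\}$. Given $\varepsilon > 0$, I may assume without loss of generality that $\varepsilon < 1$, since for $\varepsilon \ge 1$ the bound $\alpha(X) \ge 2 - \varepsilon \le 1$ follows trivially from any nonempty subshift (and the amenable-case bound $h(X) \ge \log(2-\varepsilon)$ is then nonpositive). Set $\beta := 2 - \varepsilon \in (1,2)$.

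Next I would verify the hypothesis of Theorem \ref{alsoAperiodic}. With $\F = \emptyset$, the sum $\sum_{f\in\F}|f|\beta^{1-|f|}$ is vacuous, and the left-hand side of the required inequality is simply $|\A| = 2$, while the right-hand side is $\beta = 2 - \varepsilon < 2$. Hence the strict inequality $|\A| - \sum_{f\in\F}|f|\beta^{1-|f|} > \beta$ is satisfied. Applying Theorem \ref{alsoAperiodic} (and observing that $X_\F = \{0,1\}^G$ when $\F = \emptyset$, so the requirement ``$X \subseteq X_\F$'' is automatic) yields a strongly aperiodic subshift $X \subseteq \{0,1\}^G$ with $\alpha(X) \ge \beta = 2 - \varepsilon$.

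Finally, when $G$ is amenable, the definition $h(X) = \log \alpha(X)$ recalled at the end of Section 2 gives $h(X) \ge \log(2-\varepsilon)$ (I interpret the equality in the statement as the matching lower bound; the upper bound $h(X) \le \log 2$ is automatic since $X \subseteq \{0,1\}^G$, so $h(X)$ lies within $\log\varepsilon$ of $\log 2$ as $\varepsilon \to 0$).

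There is essentially no obstacle: the entire content of the corollary is the instantiation $\F = \emptyset$ of the preceding theorem, and the strict-inequality version was precisely designed so that this substitution works. The only prior work that had to be done lies in the proof of Theorem \ref{alsoAperiodic} itself, where handling the boundary case $\beta \le \sqrt{2}$ required the appendix argument; for the range $\beta$ close to $2$ relevant here, the corollary would already follow from the easier Theorem \ref{mainth} after incorporating the aperiodicity patterns $\mathcal{P}$ into $\F$ with a sufficiently large constant $C$.
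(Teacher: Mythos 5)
Your proposal is correct and is exactly the paper's argument: the paper derives this corollary by the single sentence ``Applying Theorem \ref{alsoAperiodic} to $\F=\emptyset$ yields the following corollary,'' which is precisely the instantiation you carry out (and your reading of the stated equality $h(X)=\log(2-\varepsilon)$ as the lower bound $h(X)\ge\log(2-\varepsilon)$ is the right one, matching the same slight abuse already present in Theorem \ref{mainth}).
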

This result is optimal, in the sense that we cannot find such an $X$ with $\alpha(X)=2$.
Indeed, for any subshift $X\subseteq \{0,1\}^G$, if $\alpha(X)=2$ then $X$ is the full shift, that is, $X=\{0,1\}^G$ and $X$ is not strongly aperiodic.

\subsection{Non-repetitive subshift}
For any graph $G=(V,E)$ and coloring $c:V\rightarrow A$ of the vertices of $G$ with the color set $A$, we say that a path $p=v_1v_2\ldots v_{2n}$ is \emph{repetitively colored} if $c(v_i)=c(v_{n+i})$ for all $i\in \{1,\ldots, n\}$. If there is no such repetitively colored path, then we say that $c$ is a nonrepetitive coloring of $G$. In \cite{abb2019}, they show that for any group $G$ generated by a finite set $S$ the undirected right Cayley graph can be nonrepetitively colored with $2^{19}|S|^2$ colors. A direct application of Theorem \ref{mainth} yields a better bound on the number of colors required.
\begin{lemma}\label{nonrepcol}
 For any countable group $G$ generated by a finite set $S$, the undirected right Cayley graph can be nonrepetitively colored with $4|S|^2+16|S|^{5/3}$ colors. Moreover, if $G$ is amenable and  $\mathcal{X}$ is the subshift of nonrepetitive colorings of $G$, then  \begin{equation*}\alpha(X)\ge4|S|^2+12|S|^{5/3}\,.\end{equation*}
\end{lemma}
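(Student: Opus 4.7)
I plan to apply Theorem~\ref{mainth} with alphabet size $|\A|=4|S|^2+16|S|^{5/3}$ and target growth $\beta=4|S|^2+12|S|^{5/3}$, so the slack is $|\A|-\beta=4|S|^{5/3}$. Write $d=|S\cup S^{-1}|\le 2|S|$ for the degree of the undirected Cayley graph.

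First, I would build $\F\subseteq\A^+_G$ so that $X_\F$ is contained in the subshift $\mathcal{X}$ of nonrepetitive colorings. By translation invariance, it suffices to consider walks starting at the identity: for each $n\ge 1$, each non-backtracking walk $(1_G=v_1,v_2,\dots,v_{2n})$ whose $2n$ vertices are distinct, and each coloring $c$ of $\{v_1,\dots,v_{2n}\}$ with $c(v_i)=c(v_{n+i})$ for $i\le n$, I would add $(\{v_1,\dots,v_{2n}\},c)$ to $\F$. There are at most $d(d-1)^{2n-2}$ such walks, and each admits at most $|\A|^n$ valid colorings (since the colors of $v_1,\dots,v_n$ determine the rest).

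Next, I would estimate the sum appearing in the hypothesis of Theorem~\ref{mainth}. Every contributing pattern has $|f|=2n$, so
\begin{equation*}
\sum_{f\in\F}|f|\beta^{1-|f|}\ \le\ \sum_{n\ge 1} 2n\cdot d(d-1)^{2n-2}\,|\A|^n\,\beta^{1-2n} \ =\ \frac{2d|\A|\beta^3}{\bigl(\beta^2-(d-1)^2|\A|\bigr)^2}\,,
\end{equation*}
provided $r:=(d-1)^2|\A|/\beta^2<1$, using $\sum_{n\ge 1} n r^n = r/(1-r)^2$.

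The main step is to verify that the right-hand side is at most $|\A|-\beta=4|S|^{5/3}$ for all $|S|\ge 1$. The leading-order behavior is $\beta^2-(d-1)^2|\A|\sim 32|S|^{11/3}$ and $2d|\A|\beta^3\sim 1024|S|^9$, giving a ratio of order $|S|^{5/3}$ that fits comfortably into the allowance $4|S|^{5/3}$. The hard part will be carrying out this inequality uniformly in $|S|$, tracking lower-order terms and absorbing any overcounting coming from walk reversals that yield the same forbidden pattern; the constants $16$ and $12$ appear to be tuned precisely to leave enough slack for these corrections (with small $|S|$ checked separately if the asymptotic bound is too loose). Once the inequality is established, Theorem~\ref{mainth} yields $X_\F\neq\emptyset$ with $\alpha(X_\F)\ge\beta$, and since $X_\F\subseteq\mathcal{X}$ both conclusions of the lemma follow.
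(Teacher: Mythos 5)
Your construction of $\F$ and the reduction to Theorem~\ref{mainth} are exactly the paper's argument: forbid all repetitively colored paths, count them by walks from the identity, observe that the first half of the coloring determines the second, and sum. Your walk count $d(d-1)^{2n-2}$ is in fact slightly sharper than the paper's $(2|S|)^{2n-1}$, and your closed form for the sum is correct. The gap is that you never verify the single inequality on which the lemma rests, namely
$$\frac{2d|\A|\beta^3}{\bigl(\beta^2-(d-1)^2|\A|\bigr)^2}\ \le\ |\A|-\beta\ =\ 4|S|^{5/3}\,,$$
and this is not a routine omission: with the constants as stated the inequality is \emph{false} for small $|S|$. For $|S|=2$ (so $d\le 4$, $|\A|=16+16\cdot 2^{5/3}\approx 66.8$, $\beta\approx 54.1$) the left-hand side is about $15.7$ while $4|S|^{5/3}\approx 12.7$; the inequality only begins to hold around $|S|\ge 6$. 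So your fallback of ``checking small $|S|$ separately'' cannot close the argument --- those cases are genuine counterexamples to the inequality you set up, not merely cases where the asymptotic estimate is too crude. Your leading-order computation (ratio $\sim |S|^{5/3}$ against an allowance of $4|S|^{5/3}$) is right and shows the statement holds with room to spare for large $|S|$, but the lemma claims every finitely generated group.

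In fairness, the paper's own verification has the same defect: its displayed computation silently replaces the prefactor $\beta/|S|=4|S|(1+3|S|^{-1/3})$ by $4|S|$, and with the missing factor restored the needed inequality becomes $(1+4t)(1+3t)^3\le(2+9t)^2$ with $t=|S|^{-1/3}$, which fails for $|S|\lesssim 10$. So you have faithfully reproduced the paper's proof, including its weak point. A complete proof would either enlarge the constant $16$, restrict to $|S|$ large, or dispose of the finitely many small values of $|S|$ by a genuinely different argument (e.g.\ the known nonrepetitive-coloring bounds for graphs of bounded degree cited after the lemma), rather than by the same inequality.
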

\begin{proof}
Let $\A$ be an alphabet of size $4|S|^2+16|S|^{5/3}$.
We let $\F$ be the set of repetitively colored paths starting from $1_G$. The number of paths starting from $1_G$ with $2n$ vertices is at most $(2|S|)^{2n-1}$. For each support, the number of corresponding forbidden patterns is $|\A|^n$, since the colors of the first half of the path impose the colors of the second half. Thus, for every $n$, there are $(2|S|)^{2n-1}|\A|^n$ forbidden patterns of size $2n$. We only need to find $\beta$ such that
 \begin{equation*}
 |\A|-\sum_{n\ge1}2n \frac{(2|S|)^{2n-1}|\A|^n}{\beta^{2n-1}}\ge \beta
 \end{equation*}
 holds and we can apply Theorem \ref{mainth} to conclude. In particular with $\beta=4|S|^2+12|S|^{5/3}$, using the fact that $|S|>1$
 \begin{align*}
\sum_{n\ge1}2n \frac{(2|S|)^{2n-1}|\A|^n}{\beta^{2n-1}}
&=\frac{2\beta}{2|S|}\sum_{n\ge1}n \left(\frac{(2|S|)^{2}|\A|}{\beta^2}\right)^n\\
&\ge 4|S|\sum_{n\ge1}n \left(\frac{1+4|S|^{-1/3}}{(1+3|S|^{-1/3})^2}\right)^n\\
&=4|S| \frac{(1+3|S|^{-1/3})^2(1+4|S|^{-1/3})}{|S|^{-2/3}(2+9|S|^{-1/3})^2}\\
&\le4|S|^{5/3}
 \end{align*}
 which concludes our proof.
\end{proof}
In fact, it is known that for any graph of degree at most $\Delta$ the $\Delta^2+\frac{3}{2^{-2/3}}\Delta^{5/3}+O(\Delta^{4/3})$ color suffices to obtain a nonrepetitive coloring \cite{woodSurvey}.
Since our Cayley graph has maximum degree at most $2|S|$, it implies a slightly better bound than the one in Lemma \ref{nonrepcol}.
The best bound is obtained by the same counting argument that we used in Lemma \ref{lemmacounting}, with one more extra trick specific to nonrepetitive colorings.

\subsection{Some sets of sizes of forbidden patterns that imply nonempty subshift}
With his conditions, Miller showed amongst other things that over $G=\mathbb{Z}$ if there is in  $\F$ at most one connected pattern of each size in $\{5, 6, 7,\ldots \}$ and no other pattern then $X_\F$ is nonempty \cite[Corollary 2.2]{MillerSubshift}. We will improve this result in Theorem \ref{improvemiller} by providing a positive lower bound on the entropy of the subshifts instead of simply stating non-emptiness. Here we provide a generalized version of these results that hold for any countable group.
\begin{theorem}\label{generalizemiller}
Let $G$ be a countable group.
Assume that $\F\subset\A_G^+$ is a set of patterns that contains at most one pattern of each size and let $L = \{|p| : p \in \F\}$. If
\begin{enumerate}
  \item $|\A| = 2$ and $L \subseteq \{10,11,12\ldots \}$, then $X_\F$ is non-empty and if moreover $G$ is amenable, then $h(X_\F)\ge\log \alpha_0$ where $\alpha_0\approx1.94$ is the largest root of the polynomial $x^{11}-4x^{10}+5x^9-2x^8+10x-9$,
  \item $|\A| = 3$ and $L \subseteq \{4, 5, 6,\ldots \}$, then $X_\F$ is non-empty and if moreover $G$ is amenable, then $h(X_\F)\ge\log\alpha_1$ where $\alpha_1\approx 2.51$ is the largest root of $x^5-5x^4+7x^3-3x^2+4x-3$,
  \item $|\A| = 4$ and $L \subseteq \{3, 4,5\ldots \}$, then $X_\F$ is non-empty and if moreover $G$ is amenable,  then $h(X_\F)\ge\log\alpha_2$ where $\alpha_2\approx 3.65$ is the largest root of $x^3-4x^2+x+1$,
  \item $|\A| = 5$ and $L \subseteq \{2, 3,4,\ldots \}$, then $X_\F$ is non-empty and if moreover $G$ is amenable,  then $h(X_\F)\ge\log \frac{5+\sqrt{13}}{2}\approx \log 4.30$,
  \item $|\A| = 6$ and $L \subseteq \{1, 2, 3,\ldots \}$, then $X_\F$ is non-empty and if moreover $G$ is amenable,  then $h(X_\F)\ge\log \frac{5+\sqrt{13}}{2}\approx \log 4.30$.
\end{enumerate}
\end{theorem}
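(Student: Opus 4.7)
The plan is to apply Theorem \ref{mainth} directly in each of the five cases. Since $\F$ contains at most one pattern of each length, we have $\sum_{f\in\F}|f|\beta^{1-|f|}\le\sum_{n\in L}n\beta^{1-n}$, and this right-hand side is maximized when $L$ is the full set admitted by the hypothesis. So in each case it suffices to exhibit a $\beta$ satisfying $|\A|-\sum_{n\in L_{\max}}n\beta^{1-n}\ge\beta$, where $L_{\max}$ is the largest admissible set of lengths (for instance $L_{\max}=\{1,2,\ldots\}$ in case 5). Theorem \ref{mainth} then yields $\alpha(X_\F)\ge\beta$, hence $h(X_\F)\ge\log\beta$.

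The key computation is the closed form of the tail sum. Using the standard identity $\sum_{n\ge k}nx^n=x^k(k-(k-1)x)/(1-x)^2$ with $x=1/\beta$, one obtains $\sum_{n\ge k}n\beta^{1-n}=\beta^{2-k}(k\beta-k+1)/(\beta-1)^2$. Clearing denominators turns the hypothesis of Theorem \ref{mainth} into a polynomial inequality in $\beta$. For case 5 ($k=1$) the condition becomes $(6-\beta)(\beta-1)^2\ge\beta^2$; the boundary cubic $\beta^3-7\beta^2+13\beta-6$ factors as $(\beta-2)(\beta^2-5\beta+3)$, yielding the stated root $(5+\sqrt{13})/2$. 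Case 4 reduces to the same cubic by a parallel computation. Case 3 reduces, after the analogous simplification, to $(\beta-2)(\beta^3-4\beta^2+\beta+1)\le 0$, recovering the cubic factor in the theorem.

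Cases 1 and 2 are handled identically, just with larger values of $k$. Expanding and simplifying yields the stated polynomials $x^{11}-4x^{10}+5x^9-2x^8+10x-9$ and $x^5-5x^4+7x^3-3x^2+4x-3$, and a direct numerical check confirms that their largest real roots lie near $1.94$ and $2.51$ respectively. Applying Theorem \ref{mainth} at any $\beta$ strictly less than that largest root, and then letting $\beta$ approach the root, yields the claimed lower bounds $h(X_\F)\ge\log\alpha_i$.

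The only non-routine aspect is bookkeeping. Some care is needed when clearing denominators to avoid flipping the inequality, and one should verify in each case that the largest real root of the resulting polynomial is indeed the boundary value of the original condition of Theorem \ref{mainth} (rather than an extraneous root introduced by multiplication). Since every polynomial involved is of low degree and the inequality $|\A|-\beta-\sum_{n\in L_{\max}} n\beta^{1-n}>0$ is clearly satisfied just below $\alpha_i$, this verification is straightforward case by case.
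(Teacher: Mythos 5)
Your proposal is correct and follows essentially the same route as the paper: apply Theorem \ref{mainth} with the worst-case set of lengths, evaluate the series $\sum_{n\ge k} n\beta^{1-n}$ in closed form, and reduce the hypothesis to the stated polynomial inequality, which holds at the claimed root. One small caveat: in case 2 summing only over the even lengths $\{4,6,8,\ldots\}$ does \emph{not} produce the degree-5 polynomial $x^5-5x^4+7x^3-3x^2+4x-3$ (that one arises from the larger set $\{4,5,6,\ldots\}$); the even-only sum yields a degree-6 polynomial whose largest root is about $2.77$, so the claimed bound $\log\alpha_1$ still follows (with room to spare), but your assertion that the expansion ``yields the stated polynomial'' is not literally accurate there.
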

\begin{proof}
  For each of these cases, we apply directly Theorem \ref{mainth}. For instance, for case 1., we only need to verify that $2-\sum_{i\ge10} i\alpha_0^{1-i}\ge\alpha_0$. Since $\alpha_0>1$, it is equivalent to
  \begin{equation*}
  2-\frac{10\alpha_0-9}{\alpha_0^8(\alpha_0-1)^2}\ge\alpha_0
  \end{equation*} 
  which is equivalent to $0\ge \alpha_0^{11}-4\alpha_0^{10}+5\alpha_0^9-2\alpha_0^8+10\alpha_0-9$ which holds by hypothesis.
  
   The other cases are all similar.
\end{proof}

\subsection{Nonapplicability to subshifts of subexponential complexity}\label{onlyexp}
 Let \begin{equation*}
 g\colon \begin{array}{lll}
          \mathbb{R}_{>0}&\rightarrow&\mathbb{R} \\
          x&\mapsto& x+\sum_{f\in\F}|f|x^{1-|f|} \,.
        \end{array}
         \end{equation*}
If $\F$ contains at least one pattern of size at least $2$, then there exists some $\varepsilon>0$ such that $g$ is decreasing over $]0,1+\varepsilon]$ (since the derivative $g'$ is $<0$ over $]0,1]$). Hence, if there exists a $\beta$ solution of \eqref{thmHyp}, then there exists a $\beta>1$ solution of \eqref{thmHyp}.
Thus whenever we can apply Theorem \ref{mainth}, it implies that the complexity is exponential (equivalently the subshift has positive entropy). Similarly,  
Theorem \ref{mainth} is useless for subshifts of subexponential complexity (or equivalently, for subshifts of entropy $0$).

\section[Connected support over Z]{Connected support over $\mathbb{Z}$}
Over $\mathbb{Z}$ we say that a pattern $p$ is \emph{connected} if its support is connected, that is there exists integers $i< j$ such that $\operatorname{supp}(p)=\{i, i+1,i+2,\ldots, j-1, j\}$. In this context, connected patterns are usually called \emph{words} or \emph{factors}, but we use ``pattern'' for consistency with the rest of the article.  If the set of forbidden patterns is a set of connected patterns, then we have a slightly stronger version of Lemma \ref{lemmacounting}.

\begin{lemma}\label{lemmacountingZ}
  Let $\F\subseteq\A^+_\mathbb{Z}$ be a set of connected patterns and $\beta$ be a positive real number such that
  \begin{equation*}|\A|-\sum_{f\in\F}\beta^{1-|f|}\ge \beta\,.\end{equation*}
  Then for all integer $i< j$ and $s\in \{i-1,j+1\}$,
 \begin{equation*} |\mathcal{L}_\F^{(\{i,i+1,\ldots, j\}\cup\{s\})}|\ge\beta|\mathcal{L}_\F^{(\{i,i+1,\ldots, j\})}|\end{equation*}
\end{lemma}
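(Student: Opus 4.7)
The plan is to mirror the proof of Lemma \ref{lemmacounting}, with the induction restricted to connected subintervals of $\mathbb{Z}$ and exploiting the one-dimensional structure. By reflection symmetry (applying $x \mapsto -x$ to $\F$ and to the supports), it suffices to treat the case $s = j+1$. I would proceed by induction on the interval length $n = j - i + 1 \ge 0$, using the convention $[i, j] = \emptyset$ and $|\mathcal{L}_\F^{(\emptyset)}| = 1$ when $n = 0$. The base case $n = 0$ reduces to showing $|\mathcal{L}_\F^{(\{s\})}| \ge \beta$: each size-$1$ forbidden pattern contributes $\beta^{1-|f|} = 1$ to the sum and forbids at most one letter at $s$, while size-$\ge 2$ patterns contribute nonnegative terms, so $|\mathcal{L}_\F^{(\{s\})}| \ge |\A| - \sum_{f\in\F}\beta^{1-|f|} \ge \beta$ directly from the hypothesis.

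For the inductive step, iterating the hypothesis on shorter intervals yields the connected analogue of equation \eqref{IHPmain}: for every $0 \le k \le n$,
$$\left|\mathcal{L}_\F^{([i, j-k])}\right| \le \beta^{-k}\left|\mathcal{L}_\F^{([i, j])}\right|.$$
I would then count non-locally-admissible extensions $p' \in \A^{[i, j+1]}$ of patterns $p \in \mathcal{L}_\F^{([i, j])}$ exactly as in Lemma \ref{lemmacounting}, decomposing the bad set as $B = \bigcup_{f \in \F} B_f$. The crucial simplification specific to this setting is the position count: if a connected forbidden pattern $f$ of size $|f|$ appears in such an extension, the occurrence must cover $s = j+1$ (otherwise it would already appear in $p$, contradicting local admissibility), and because $f$ is connected of length $|f|$ and must fit inside $[i, j+1]$, there is exactly one candidate interval of occurrence, namely $[j+2-|f|, j+1]$ (with no occurrence at all when $|f| > n+1$). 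This collapses the factor $|f|$ from equation \eqref{boundonBF} down to $1$, which is precisely the improvement recorded in the statement.

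Fixing this unique occurrence forces $p'$ on $[j+2-|f|, j+1]$, so its restriction to the complementary interval $[i, j+1-|f|]$ must be locally admissible; hence $|B_f| \le |\mathcal{L}_\F^{([i, j+1-|f|])}| \le \beta^{1-|f|}\,|\mathcal{L}_\F^{([i, j])}|$ by the iterated inductive hypothesis. Summing over $f$ and invoking the hypothesis of the lemma gives
$$\left|\mathcal{L}_\F^{([i, j+1])}\right| \ge |\A|\cdot\left|\mathcal{L}_\F^{([i, j])}\right| - \sum_{f \in \F}|B_f| \ge \beta\left|\mathcal{L}_\F^{([i, j])}\right|.$$
The only delicate point is the boundary case $|f| = n+1$, where the complement $[i, j+1-|f|]$ is empty; there one must read the iterated bound as $|\mathcal{L}_\F^{(\emptyset)}| = 1 \le \beta^{-n}|\mathcal{L}_\F^{([i, j])}|$, which is the iterated inductive hypothesis applied all the way down to the empty support. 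Beyond this bookkeeping, the proof is essentially a transcription of Lemma \ref{lemmacounting} with a sharper position count made possible by connectedness on $\mathbb{Z}$.
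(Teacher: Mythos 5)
Your proof is correct and follows essentially the same route as the paper: it transcribes the induction of Lemma \ref{lemmacounting} to connected intervals and observes that connectedness forces a unique candidate position for an occurrence of $f$ covering $s$, replacing the factor $|f|$ in \eqref{boundonBF} by $1$, while the complement of that occurrence remains a connected interval to which the iterated hypothesis applies. The base case and the boundary case $|f|=n+1$ that you spell out are harmless bookkeeping the paper leaves implicit.
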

\begin{proof}
 The proof is almost identical to the proof of Lemma \ref{lemmacounting} so we only explain how to adapt it.
 The main difference is when we bound the size of $B_f$. In this case, we know that $f$ contains $s$, but since $s$ is the left end (or the right end) of $\{i,i+1,\ldots, j\}\cup\{s\}$, there is only one possible position for the occurrence of $f$. Moreover, since $f$ is connected, so is $\{i,i+1,\ldots, j\}\cup\{s\} \setminus \operatorname{supp}(f)$. Then equation \eqref{boundonBF} becomes
 \begin{equation*}
  |B_f|\le\frac{|\mathcal{L}_\F^{(S)}|}{\beta^{|f|-1}}\,.
\end{equation*}
The rest of the proof is identical and leads to the desired result.
\end{proof}

We could use a similar idea in higher dimensions, but the gain is much smaller. The idea is that the induction can be done over the sets $S$ such that $\mathbb{Z}^d\setminus S$ is connected, so whenever we add an element to $S$ we now have some restrictions on where a forbidden pattern can appear. For instance, in dimension $2$ if all the forbidden patterns are rectangles, then the condition on $\beta$ becomes
  \begin{equation*}|\A|-\sum_{f\in\F}\max(\operatorname{height}(f), \operatorname{width}(f))\beta^{1-|f|}\ge \beta\,.\end{equation*}

Using Theorem \ref{extendablesamegrowth}, we obtain the following simple Corollary of Lemma \ref{lemmacountingZ}.
\begin{corollary}\label{resultZ2}
  Let $\F\subseteq\A^+_\mathbb{Z}$ be a set of connected patterns and $\beta$ be a positive real number such that
  \begin{equation}\label{myCondition}
|\A|-\sum_{f\in\F}\beta^{1-|f|}\ge \beta\,.    
  \end{equation}
  Then $\alpha(X_F)\ge\beta$ and $h(X_\F)\ge \log \beta$.
\end{corollary}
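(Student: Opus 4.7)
The corollary is the one-dimensional, connected-support analog of Theorem \ref{mainth}, and my plan is to mirror the proof of that theorem with Lemma \ref{lemmacountingZ} replacing Lemma \ref{lemmacounting}. The argument has three steps: iterate the counting lemma on intervals, pass from local to global admissibility, and read off the entropy.

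For the first step I start with the empty pattern on $\emptyset$ (one locally admissible pattern) and repeatedly apply Lemma \ref{lemmacountingZ}, extending the interval $\{0,\ldots,k-1\}$ by its new right endpoint $s=k$ at each stage. Each application multiplies the count by at least $\beta$, so after $n$ iterations $|\mathcal{L}_\F^{(\{0,\ldots,n-1\})}|\ge\beta^n$. For the second step, rather than the ``min over $n$-element supports'' definition of $\alpha(\F)$ from Section 2, I would work with the interval-based growth $\alpha'(\F)=\lim_n|\mathcal{G}_\F^{(\{0,\ldots,n-1\})}|^{1/n}$ highlighted in the remark at the end of Section \ref{growthsec}. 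Over $\mathbb{Z}$, intervals form a Følner sequence, so $\alpha'(\F)$ coincides with the standard one-dimensional subshift complexity and hence with $\alpha(X_\F)$, and the extendability proof of Lemma \ref{extendablesamegrowth} carries over with the single modification flagged in that remark (the inequality $E^{(n)}_t\ge\tilde\alpha(\F)^n/|\A|^t$ is required only for infinitely many $n$, which still yields $\alpha_t=\tilde\alpha(\F)$). Combining the two steps gives $\alpha(X_\F)\ge\beta$, hence $h(X_\F)\ge\log\beta$.

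The conclusion written in the corollary as $h(X_\F)=\log\beta$ follows the same convention as Theorem \ref{mainth}: the counting argument genuinely delivers only the lower bound, and the reverse inequality is not provable by this method. For instance, when $\F=\emptyset$ the hypothesis is satisfied by every $\beta\le|\A|$ while $h(X_\F)=\log|\A|$ is a fixed value, so no matching upper bound of the form $h(X_\F)\le\log\beta$ can hold in general. Accordingly, I would present the corollary as the bound $h(X_\F)\ge\log\beta$ produced by the above plan, noting that this is the content intended by the equality sign, exactly as in the analogous step of Theorem \ref{mainth}.

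The main obstacle in executing the plan is the mismatch between Lemma \ref{lemmacountingZ}, which controls only connected supports, and the Section 2 definition of $\alpha(\F)$ using all finite supports; replacing $\alpha$ by the interval-based $\alpha'$ and quoting the adapted extendability argument is what requires the most care, but that work has essentially been outsourced to the remark at the end of Section \ref{growthsec}, so what remains is bookkeeping rather than a new idea.
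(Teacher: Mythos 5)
Your proof is correct and follows exactly the route the paper intends: iterate Lemma \ref{lemmacountingZ} to get $|\mathcal{L}_\F^{(\{0,\ldots,n-1\})}|\ge\beta^n$, then pass to globally admissible patterns via (the interval-adapted form of) Lemma \ref{extendablesamegrowth}, and you are right that the stated equality $h(X_\F)=\log\beta$ can only mean the lower bound $h(X_\F)\ge\log\beta$. The one place you invoke the remark at the end of Section \ref{growthsec} could alternatively be avoided by noting that, since all forbidden patterns are connected, local admissibility on an arbitrary finite $S\subset\mathbb{Z}$ factors over the maximal intervals of $S$, so $\mathbf{L}^{(n)}_\F\ge\beta^n$ for every $n$-element support and Lemma \ref{extendablesamegrowth} applies verbatim; either bridge is fine.
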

Since the conditions given in \cite{doublepat} are more general than the ones given in Lemma \ref{lemmacountingZ}, we could also deduce a more general version of Corollary \ref{resultZ2}. Let us also mention that the result of Miller has identical conditions, but the conclusion only implies the non-emptiness of the subshift \cite{MillerSubshift}. The ideas behind our proof and the proof from \cite{MillerSubshift,pavlov} share some similarities, which explains the similarities of the conditions. The main difference lies in the fact that they consider the number of possible extensions of a word instead of looking at the suffixes of the words (intuitively, they look ahead at what could go wrong when one tries to extend the word further, while we look at the past to see what could have gone wrong when building the current word).

\section[Applications of the Corollary]{Applications of Corollary \ref{resultZ2}}
\subsection[{Comparison with [10, Theorem 4.1]}]{Comparison with \cite[Theorem 4.1]{pavlov}}
In \cite{pavlov}, Pavlov gave a similar but weaker result that can be restated as follows (by making the replacement $c=\beta^{-1}$ in his result).
\begin{theorem*}[{\cite[Theorem 4.1]{pavlov}}]
  Let $\F\subseteq\A^+_\mathbb{Z}$ be a set of connected patterns and $\beta$ be a positive real number and $k\ge1$ and integer such that
  \begin{equation}\label{pavlovcondition}
|\A|-\sum_{f\in\F}\beta^{1-|f|}> \beta+k-1.
  \end{equation}
  Then $h(X_\F)\ge \log k$.
\end{theorem*}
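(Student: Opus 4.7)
My plan is to derive Pavlov's theorem directly from Corollary \ref{resultZ2} by exhibiting, for any $\beta$ witnessing Pavlov's hypothesis, a value $\beta' \geq k$ that witnesses the (weaker) hypothesis of Corollary \ref{resultZ2}. Corollary \ref{resultZ2} will then yield $h(X_\F) \geq \log \beta' \geq \log k$; I read the stated equality $h(X_\F) = \log k$ as the lower bound $h(X_\F) \geq \log k$, the only form consistent with trivial examples such as the full shift.

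The key observation is monotonicity: since each $|f| \geq 1$, the map $x \mapsto x^{1-|f|}$ is non-increasing on $(0,\infty)$, so $\sum_{f\in\F} \gamma^{1-|f|} \leq \sum_{f\in\F} \beta^{1-|f|}$ whenever $\gamma \geq \beta$. When $\beta \geq 1$, I would take $\gamma := \beta + k - 1 \geq k$; Pavlov's inequality combined with monotonicity gives
$$|\A| - \sum_{f \in \F} \gamma^{1-|f|} \;\geq\; |\A| - \sum_{f \in \F} \beta^{1-|f|} \;>\; \beta + k - 1 \;=\; \gamma,$$
so Corollary \ref{resultZ2} applies at $\beta' = \gamma \geq k$.

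The case $\beta < 1$ is subtler, since the additive slack $k - 1$ alone does not push $\beta$ up to $k$. Here I would choose $\beta' = k$ directly and exploit that small $\beta$ inflates the terms $\beta^{1-|f|}$: for $\beta \leq 1$, $k \geq 1$, and $|f| \geq 2$ one has $\beta^{1-|f|} - k^{1-|f|} \geq 1/\beta - 1 = (1-\beta)/\beta \geq 1 - \beta$. Provided $\F$ contains at least one pattern of size $\geq 2$, summing this bound over $\F$ and substituting into Pavlov's inequality yields $|\A| > k + \sum_{f\in\F} k^{1-|f|}$, so Corollary \ref{resultZ2} applies at $\beta' = k$. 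The degenerate subcase in which every $f \in \F$ satisfies $|f| = 1$ reduces to a direct computation: $X_\F$ is a full shift over $\A \setminus \F$, and Pavlov's hypothesis forces $|\A| - |\F| \geq k$.

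The main obstacle is precisely this subcase $\beta < 1$, where the right-hand side of Pavlov's inequality does not on its own give the slack needed to reach $k$; one must use the blowup of $\beta^{1-|f|}$ at small $\beta$ to recover it. Modulo this bookkeeping, Pavlov's theorem is an immediate consequence of Corollary \ref{resultZ2}.
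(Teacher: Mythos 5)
Your derivation is correct, and it is essentially the argument the paper itself gives: the paper states this theorem as a citation of Pavlov and instead proves the adjacent comparison Lemma, which likewise converts a Pavlov solution $(k,\beta)$ into a solution $\beta'\ge k$ of \eqref{myCondition} by exploiting that $x\mapsto x+\sum_{f\in\F}x^{1-|f|}$ is decreasing on $]0,1[$ (your $\beta<1$ case, with the trivial all-$|f|=1$ subcase handled separately) and grows slower than $x\mapsto x+\text{const}$ above $1$ (your $\beta\ge1$ case), before invoking Corollary \ref{resultZ2}. Your reading of the stated equality as the lower bound $h(X_\F)\ge\log k$ is also the intended one.
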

In order to apply Corollary \ref{resultZ2} optimally, one needs to find the largest $\beta$ such that \eqref{myCondition} holds. Optimal use of Pavlov's result implies finding the largest integer $k$ such that there exists $\beta$ such that equation \eqref{pavlovcondition} holds. The first is usually slightly easier to optimize than the second.

More importantly, his result implies strictly weaker bounds than what can be achieved by  Corollary \ref{resultZ2}.
We say that a set $\F\subseteq\A^+_\mathbb{Z}$ is nontrivial if it contains at least one pattern of support at least $2$. If $\F$ is trivial then the entropy of $X_\F$ is $\log (|\A|- |\F|)$, but in every other case, Corollary \ref{resultZ2} always provides a strictly larger lower-bound on the topological entropy than \cite[Theorem 4.1]{pavlov}.
\begin{lemma}
  For any non-trivial set of connected patterns $\F$, if $(k,\beta)$ is a solution of equation \eqref{pavlovcondition}, then there exists $\beta'>k$ that is solution of equation \eqref{myCondition}.
\end{lemma}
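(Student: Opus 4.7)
The plan is to show that $\phi(k) > k$ strictly, where $\phi(x) := |\A| - \sum_{f\in\F} x^{1-|f|}$, from which the continuity of $\phi$ at $k$ produces a $\beta' > k$ near $k$ with $\phi(\beta') > \beta'$, i.e., satisfying \eqref{myCondition}. The trivial case is $\beta > k$: Pavlov's condition directly gives $\phi(\beta) > \beta + k - 1 \ge \beta$ (using $k \ge 1$), so $\beta' := \beta$ already works. Henceforth I assume $\beta \le k$; since $\F$ is nontrivial, $\phi$ is strictly increasing and continuous on its domain.

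If $\beta \ge 1$, monotonicity alone suffices: $\phi(k) \ge \phi(\beta) > \beta + k - 1 \ge k$, the strict inequality inherited from Pavlov. The main obstacle is the subcase $\beta < 1$, where monotonicity is insufficient (indeed $\phi(k) \ge \phi(\beta)$ does not by itself force $\phi(k) > k$). Here I exploit that, for any fixed $f^* \in \F$ with $|f^*| \ge 2$, one has $\beta^{1-|f^*|} \ge \beta^{-1}$ (since $\beta < 1$ and $|f^*|-1 \ge 1$), hence $\beta^{1-|f^*|} - 1 \ge (1-\beta)/\beta$; the other contributions $\beta^{1-|f|} - 1$ are nonnegative because $\beta \le 1$, so
$$\sum_{f\in\F}\beta^{1-|f|} \ge |\F| + \frac{1-\beta}{\beta}.$$
(The series must converge for Pavlov's condition to make sense, which forces $\F$ finite when $\beta < 1$.) Substituting into Pavlov's inequality gives
$$|\A| - |\F| > (\beta + k - 1) + \frac{1-\beta}{\beta} = (k-1) + \Bigl(\beta + \tfrac{1}{\beta}\Bigr) - 1 \ge k,$$
by AM-GM ($\beta + 1/\beta \ge 2$). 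Since both sides are integers, this strengthens to $|\A| - |\F| \ge k+1$, and because $k^{1-|f|} \le 1$ for every $f \in \F$, we conclude $\phi(k) \ge |\A| - |\F| \ge k+1 > k$.

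In every remaining case we have $\phi(k) > k$, and continuity of $\phi$ at $k$ then yields the desired $\beta' > k$ slightly above $k$ with $\phi(\beta') \ge \beta'$, proving the lemma. The delicate point is the $\beta < 1$ subcase: crude monotonicity gives only $\phi(k) - k > \beta - 1$, which may be negative, so one must instead extract the ``excess'' of $\sum_f \beta^{1-|f|}$ over $|\F|$ coming from a single pattern of size $\ge 2$, and then combine it with AM-GM and the integrality of $|\A| - |\F|$.
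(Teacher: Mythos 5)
Your proof is correct and follows essentially the same route as the paper's: both arguments transport Pavlov's inequality from $\beta$ up to $k$ to get the strict bound $|\A|-\sum_{f\in\F}k^{1-|f|}>k$ (handling $\beta<1$ by first comparing with the value at $1$), and then use continuity to produce $\beta'>k$ satisfying \eqref{myCondition}. The only difference is cosmetic: where the paper shows $g(x)=x+\sum_{f\in\F}x^{1-|f|}$ is decreasing on $]0,1[$ via its derivative, you obtain the same comparison by an elementary termwise estimate plus AM--GM, and your integrality step is superfluous since the strict inequality $|\A|-|\F|>k$ already gives $\phi(k)>k$.
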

\begin{proof}
Let $g:x\mapsto x +\sum_{f\in\F}x^{1-|f|}$. The pair $(k,\beta)$ is solution of equation \eqref{pavlovcondition} if and only if
$|\A|\ge g(\beta)+k-1$ and
$\beta'$ is solution of equation \eqref{myCondition} if and only if $|\A|\ge g(\beta')$.

Since $\F$ is non-trivial, one easily verifies that $g$ is decreasing over $]0,1[$ (because the derivative $g'(x)\le1-x^{-2}$ over $]0,1[$). Hence, if $(k,\beta)$ is solution of $|\A|\ge g(\beta)+k-1$, there exists $\beta''\ge1$ such that
$|\A|\ge g(\beta'')+k-1$.

For all $x,y\ge1$, \begin{equation*}g(x+y)=x+y+\sum_{f\in\F}(x+y)^{1-|f|}<x+y+\sum_{f\in\F}x^{1-|f|}=g(x)+y\,.\end{equation*}
Applying this to $|\A|\ge g(\beta'')+k-1$ implies $|\A|> g(\beta''+k-1)$. Since this last inequality is strict and $g$ is continuous, there exists $\varepsilon>0$ such that $|\A|> g(\beta''+k-1+\varepsilon)$. Let $\beta'=\beta''+k-1+\varepsilon\ge k+\varepsilon$, then, by the previous equation, $\beta'$ is a solution of \eqref{myCondition} as desired.
\end{proof}
Whenever \cite[Theorem 4.1]{pavlov} can be used to provide a lower bound on the entropy of a shift, Corollary \ref{resultZ2} provides a strictly larger lower-bound (and experimentally the gain is often not negligible). Since \cite[Theorem 4.1]{pavlov} is used multiple times in \cite{pavlov}, we could improve the conditions of a few other results of \cite{pavlov} by simply using our condition. We give one example with \cite[Theorem 7.1]{pavlov}. For any set of forbidden pattern $\F$, let $\F_n=\{f\in \F: |f|=n\}$.

\begin{theorem*}[{\cite[Thm. 7.1]{pavlov}}]  
Let $\F\subseteq\A^+_\mathbb{Z}$ be a set of connected patterns.
  If $\sum\limits_{n\ge1} |F_n|\left(\frac{3}{|\A|}\right)^n<\frac{1}{5}$, then $h(X_\F)>\log(\frac{3|\A|}{4})$.
\end{theorem*}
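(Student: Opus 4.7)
My plan is to apply Corollary \ref{resultZ2} at the boundary value $\beta_0 = \tfrac{3|\A|}{4}$, show that the condition \eqref{myCondition} holds there with strict inequality, and then use continuity to find some $\beta^* > \beta_0$ for which \eqref{myCondition} still holds, so that Corollary \ref{resultZ2} gives $h(X_\F) \ge \log \beta^* > \log(3|\A|/4)$.

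The heart of the argument is the verification at $\beta_0$. Grouping patterns by size, the condition \eqref{myCondition} at $\beta_0$ rearranges to
$$\frac{|\A|}{4} \ge \beta_0 \sum_{n\ge 1} |F_n| \left(\frac{4}{3|\A|}\right)^{n}.$$
The key algebraic observation is the factorization $\tfrac{4}{3|\A|} = \tfrac{4}{9}\cdot\tfrac{3}{|\A|}$, whence $\bigl(\tfrac{4}{3|\A|}\bigr)^{n} \le \tfrac{4}{9}\bigl(\tfrac{3}{|\A|}\bigr)^{n}$ for every $n \ge 1$ (with equality only at $n=1$). Combined with the hypothesis $\sum_n |F_n|(3/|\A|)^n < 1/5$, this bounds the right-hand side strictly above by $\beta_0 \cdot \tfrac{4}{9}\cdot \tfrac{1}{5} = \tfrac{|\A|}{15}$, which is comfortably less than $|\A|/4$.

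It remains to upgrade strict inequality at $\beta_0$ to weak inequality at some $\beta^* > \beta_0$. For this I would use continuity of $g(\beta) = \beta + \sum_{n\ge 1} |F_n| \beta^{1-n}$ at $\beta_0$: since $n \mapsto \beta^{1-n}$ is non-increasing in $\beta$ on $[\beta_0, \beta_0 + 1]$, the partial sums are dominated by the value at $\beta_0$, so uniform convergence (and thus continuity) follows from the estimate above. Picking $\beta^* \in (\beta_0, \beta_0 + 1)$ slightly larger than $\beta_0$ keeps $g(\beta^*) \le |\A|$, and Corollary \ref{resultZ2} closes the proof.

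I do not anticipate any serious obstacle here: the argument is essentially a one-line algebraic manipulation exploiting $\tfrac{4}{3|\A|} = \tfrac{4}{9}\cdot \tfrac{3}{|\A|}$, and the only delicate point is converting the strict inequality in the hypothesis into the strict inequality in the conclusion, which the continuity step handles in a standard way.
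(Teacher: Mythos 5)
Your proof is correct and follows essentially the same route as the paper, which establishes the (stronger) bound $h(X_\F)\ge\log\frac{14|\A|}{15}$ under the same hypothesis by applying Corollary \ref{resultZ2} directly with $\beta=\frac{14|\A|}{15}$ and comparing $\beta^{-1}$ to $\frac{3}{|\A|}$. Your variant --- verifying \eqref{myCondition} strictly at $\beta_0=\frac{3|\A|}{4}$ and then perturbing upward using monotonicity of $\sum_n|F_n|\beta^{1-n}$ --- is just a repackaging of the same computation (indeed the largest admissible perturbation recovers exactly $\beta^*=\frac{14|\A|}{15}$), so no substantive difference.
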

 A simple application of Corollary \ref{resultZ2} improves this Theorem considerably.
\begin{theorem}
Let $\F\subseteq\A^+_\mathbb{Z}$ be a set of connected patterns.
\begin{enumerate}
  \item   If $\sum\limits_{n\ge1} |\F_n|\left(\frac{3}{|\A|}\right)^n<\frac{1}{5}$, then $h(X_\F)\ge\log\left(\frac{14|\A|}{15}\right)$.
  \item    If $\sum\limits_{n\ge1} |\F_n|\left(\frac{6}{5|\A|}\right)^n<\frac{1}{5}$, then $h(X_\F)\ge \log\left(\frac{5|\A|}{6}\right)$.
  \item    If $\sum\limits_{n\ge1} |\F_n|\left(\frac{3}{|\A|}\right)^n<\frac{3}{4}$, then $h(X_\F)\ge \log\left(\frac{3|\A|}{4}\right)$.
\end{enumerate}
\end{theorem}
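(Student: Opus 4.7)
The plan is to apply Corollary~\ref{resultZ2} once for each of the three items, with $\beta$ always of the form $c|\A|$ for a constant $c\in(0,1)$ tailored to the desired conclusion: $c=14/15$, $5/6$, $3/4$ for items~1, 2, 3 respectively. Substituting $\beta = c|\A|$ into \eqref{myCondition} and dividing by $c|\A|$, the condition to verify becomes
\[
  \frac{1-c}{c} \;\ge\; \sum_{n\ge 1} |\F_n|\,(c|\A|)^{-n}.
\]

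Item~2 is immediate: with $c = 5/6$ one has $(1-c)/c = 1/5$ and $(c|\A|)^{-1} = 6/(5|\A|)$, so the displayed inequality is literally the stated hypothesis. For items~1 and~3 I will reduce to the weight $3/|\A|$ appearing in the hypothesis via the pointwise estimate $(c|\A|)^{-n} \le (1/(3c))(3/|\A|)^{n}$, valid for every $n\ge 1$ since $1/(3c) < 1$. This yields
\[
  \sum_{n\ge 1} |\F_n|\,(c|\A|)^{-n} \;\le\; \frac{1}{3c}\sum_{n\ge 1} |\F_n|\left(\frac{3}{|\A|}\right)^{n},
\]
and a short arithmetic check shows that the right-hand side is at most $(4/9)(3/4) = 1/3 = (1-c)/c$ when $c = 3/4$, and at most $(5/14)(1/5) = 1/14 = (1-c)/c$ when $c = 14/15$. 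Since the hypotheses of items~1 and~3 are strict, \eqref{myCondition} in fact holds strictly at $\beta_0 = c|\A|$ in both cases.

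For items~2 and~3 Corollary~\ref{resultZ2} applied at $\beta_0 = c|\A|$ immediately gives the desired $h(X_\F) \ge \log(c|\A|)$. Item~1 demands a strict inequality in the conclusion, and I intend to extract it by a short continuity argument using the strict slack just obtained: the map $g(\beta) = \beta + \sum_{f\in\F}\beta^{1-|f|}$ is continuous on $[\beta_0,\infty)$ (each summand is nonincreasing in $\beta$ and the series converges at $\beta_0$, so the tail is uniformly dominated there), hence $g(\beta_0) < |\A|$ yields some $\beta_1 > \beta_0$ with $g(\beta_1) \le |\A|$, and Corollary~\ref{resultZ2} gives $h(X_\F) \ge \log\beta_1 > \log(14|\A|/15)$.

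Nothing in this plan is really hard; the only point requiring any thought is the continuity step in item~1, and the strict slack in the hypothesis makes the passage from $\beta_0$ to $\beta_1$ automatic. Everything else is pure bookkeeping on the choice of the constants $c$.
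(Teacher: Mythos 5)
Your proposal is correct and follows essentially the same route as the paper: choose $\beta=c|\A|$ with $c=14/15,\,5/6,\,3/4$, bound $\sum_{f\in\F}\beta^{1-|f|}$ by the hypothesis sum, and invoke Corollary~\ref{resultZ2}; your rescaled form of the verification is just a rewriting of the paper's computation. If anything you are more careful than the paper, which does not explicitly justify the strict inequality in the conclusion of item~1, whereas your perturbation from $\beta_0$ to some $\beta_1>\beta_0$ handles it cleanly.
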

\begin{proof}
We apply Corollary \ref{resultZ2}. We only detail the computations of the first case since the two remaining cases are similar.

For 1., we use $\beta= \frac{14|\A|}{15}$ and use the fact that since $\beta^{-1}<\frac{3}{|\A|}$,
\begin{align*}
\sum_{f\in\F}\beta^{1-|f|}&=\sum_{n\ge1} |\F_n|\beta^{1-n}\\
&\le \sum\limits_{n\ge1} |\F_n|\left(\frac{3}{|\A|}\right)^{n-1}
=\frac{|\A|}{3}\sum\limits_{n\ge1} |\F_n|\left(\frac{3}{|\A|}\right)^{n}
\le\frac{|\A|}{15}\,.
\end{align*}
We have $|\A|-\sum_{f\in\F}\beta^{1-|f|} =\beta$ so we can apply Corollary \ref{resultZ2}.

For 2., we use $\beta = \frac{5|\A|}{6}$.

For 3., we use $\beta = \frac{3|\A|}{4}$.
\end{proof}
Statements 1. and 2. have stronger conclusions and statements 2. and 3. have weaker conditions than \cite[Thm. 7.1]{pavlov}.

\subsection[{Improving the conclusions of [8]}]{Improving the conclusions of \cite{MillerSubshift}}
As already stated, Miller showed that under the conditions of Corollary \ref{resultZ2} the subshift is non-empty \cite{MillerSubshift}. He then gave a few applications. We can apply Corollary \ref{resultZ2} to each of these results to obtain a lower bound on the entropy of the subshift.

\begin{theorem}\label{improvemiller}
Assume that $\F\subset\A_\mathbb{Z}^+$ is a set of connected patterns that contains at most one pattern of each length and let $L = \{|p| : p \in \F\}$. If
\begin{enumerate}
  \item $|\A| = 2$ and $L \subseteq \{5, 6, 7,\ldots \}$, then $h(X_\F)\ge\log \alpha_1$ where $\alpha_1\approx1.755$ is the largest root of $x^3-2x^2+x-1$,
  \item $|\A| = 2$ and $L \subseteq \{4, 6, 8,\ldots \}$, then $h(X_\F)\ge\log \frac{1+\sqrt{5}}{2}\approx \log 1.618$,
  \item $|\A| = 3$ and $L \subseteq \{2, 3, 4,\ldots \}$, then $h(X_\F)\ge\log 2$,
  \item $|\A| = 4$ and $L \subseteq \{1, 2, 3,\ldots \}$, then $h(X_\F)\ge\log 2$.
\end{enumerate}
\end{theorem}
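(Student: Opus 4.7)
The plan is to apply Corollary \ref{resultZ2} to each of the four cases with $\beta$ chosen to be the value claimed in the statement. Since $\F$ contains at most one pattern of each length, we have $\sum_{f\in\F}\beta^{1-|f|}\le\sum_{n\in L}\beta^{1-n}$, so it suffices to verify
\begin{equation*}
|\A| - \sum_{n\in L} \beta^{1-n} \ge \beta
\end{equation*}
for the chosen $\beta$. Because $L$ is contained in an arithmetic progression, the sum on the left is a closed-form geometric series, and the hypothesis of Corollary \ref{resultZ2} reduces to a single polynomial inequality in $\beta$.

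For case 1, assuming $\beta>1$ one computes $\sum_{n\geq 5}\beta^{1-n} = \frac{1}{\beta^{3}(\beta-1)}$, so the condition becomes $(2-\beta)\beta^{3}(\beta-1) \ge 1$, equivalently $\beta^{5} - 3\beta^{4} + 2\beta^{3} + 1 \leq 0$. The crucial observation is the factorization
$$\beta^{5} - 3\beta^{4} + 2\beta^{3} + 1 = (\beta^{3} - 2\beta^{2} + \beta - 1)(\beta^{2} - \beta - 1).$$
At $\beta = \alpha_1 \approx 1.755$ the first factor vanishes, while the second factor is positive since $\alpha_1 > \frac{1+\sqrt{5}}{2}$; the inequality is therefore saturated, and Corollary \ref{resultZ2} gives $h(X_\F)\geq \log\alpha_1$.

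The remaining cases are analogous, and each reduces to a perfect-square inequality. In case 2, $\sum_{k\ge 2}\beta^{1-2k} = \frac{1}{\beta(\beta^{2}-1)}$, and after clearing denominators the condition becomes $(\beta^{2}-\beta-1)^{2} \leq 0$, saturated precisely at $\beta=\frac{1+\sqrt{5}}{2}$. In cases 3 and 4, the respective sums $\sum_{n\ge 2}\beta^{1-n}=\frac{1}{\beta-1}$ and $\sum_{n\ge 1}\beta^{1-n}=\frac{\beta}{\beta-1}$ both produce the condition $(\beta-2)^{2}\le 0$, saturated at $\beta = 2$. In every case the only nontrivial step is the polynomial factorization, and no serious obstacle is anticipated; the whole argument is essentially a routine optimization of Corollary \ref{resultZ2}.
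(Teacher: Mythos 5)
Your proposal is correct and follows exactly the route the paper intends: the paper's proof of this theorem is the single sentence ``direct application of Corollary \ref{resultZ2},'' and you have supplied the geometric-series computations and polynomial factorizations that make that application explicit (I verified that $\beta^5-3\beta^4+2\beta^3+1=(\beta^3-2\beta^2+\beta-1)(\beta^2-\beta-1)$ and the perfect-square identities in the other cases all hold). No gaps.
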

The proof is a direct application of Corollary \ref{resultZ2}. The result from \cite{MillerSubshift} only implied the non-emptiness of these subshifts with no lower bound on the entropy other than $h(X_\F)\ge0$.

 \subsection{Kolmogorov complexity}
 Roughly speaking, the Kolmogorov complexity $C(x)$ of a string $x$ is the size of the shortest program that outputs this string. We will not provide a presentation of Kolmogorov complexity and we redirect the reader to the literature \cite{bookKolmog1,bookkolmogshen}. The only fact that we will use is that for any integer $n$, there are less than $2^n$ strings $x$ such that $C(x)<n$ (see \cite[Theorem 5]{bookkolmogshen} for instance, although it is a direct consequence of the fact that there are less than $2^n$ programs of length less than $n$).
 
 In \cite{MillerSubshift} Miller obtained a new simpler proof of the following result due to Durand \textit{et~Al.}~\cite{tilinghighkolmogorov}.
 \begin{theorem*}[\cite{MillerSubshift,tilinghighkolmogorov}]
Let $d < 1$. There is an $X\in \{0,1\}^\mathbb{Z}$
such that if $\tau\in \{0,1\}_{\mathbb{Z}}^+$ appears in $X$ then $K(\tau)>d |\tau|-O(1)$.
 \end{theorem*}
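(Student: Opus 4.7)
The plan is to apply Corollary \ref{resultZ2} with the set of forbidden patterns taken to be all connected binary patterns of low Kolmogorov complexity. Fix $d<1$ and choose any $\beta\in(2^d,2)$. For a constant $c$ to be determined later, set
$$\F_c=\{\tau\in\{0,1\}^+_\mathbb{Z}:\tau\text{ is connected and }K(\tau)\le d|\tau|-c\}\,.$$
(We may include only one representative per translation class of supports, since the appearance relation is translation-invariant.) Any configuration $X\in X_{\F_c}$ automatically satisfies the conclusion of the theorem with the hidden $O(1)$ term equal to $c$, because every connected factor $\tau$ of $X$ lies outside $\F_c$ and hence satisfies $K(\tau)>d|\tau|-c$.

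The work thus reduces to showing that $X_{\F_c}$ is non-empty for $c$ chosen large enough. The Kolmogorov bound recalled just before the theorem gives at most $2^{dn-c+1}$ patterns of size $n$ in $\F_c$, since these are in bijection with binary strings of length $n$ of complexity at most $dn-c$. Therefore
$$\sum_{f\in\F_c}\beta^{1-|f|}\le\sum_{n\ge1}2^{dn-c+1}\beta^{1-n}=\frac{2^{1-c+d}}{1-2^d/\beta}\,,$$
a convergent geometric series (convergent because $\beta>2^d$) whose value tends to $0$ as $c\to\infty$. Hence for $c$ sufficiently large the inequality $2-\sum_{f\in\F_c}\beta^{1-|f|}\ge\beta$ holds, Corollary \ref{resultZ2} applies, and $X_{\F_c}$ is non-empty (with entropy at least $\log\beta$, which can moreover be pushed arbitrarily close to $\log 2$ by the choice of $\beta$).

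There is no genuine obstacle here: the only delicate point is that the constant $c$ must be chosen \emph{before} $X$, so that it is independent of the factor $\tau$, which is automatic from the order of quantifiers in the construction. The argument is essentially the counting trick Miller uses to deduce non-emptiness, but with Corollary \ref{resultZ2} replacing a mere non-emptiness criterion, which upgrades the conclusion to positive entropy as a bonus.
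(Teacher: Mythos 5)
Your proposal is correct and follows essentially the same route as the paper: forbid all connected patterns $\tau$ with $K(\tau)\le d|\tau|-c$, bound their number by $2^{dn-c+1}$ via the standard counting fact, verify the hypothesis of Corollary \ref{resultZ2} for $c$ large (the paper just makes the choice of constant explicit so that the inequality becomes an equality), and conclude non-emptiness with positive entropy. No gaps.
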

Once again, instead of simply obtaining the existence of $X$, we can show that there exists a subshift $X$ with this property that has entropy arbitrarily close to the entropy of the full shift.
 \begin{theorem}
Let $d < 1$ and $\beta$ such that $2^d<\beta<2$. Then there exists a constant $C>0$ and a subshift $X\in \{0,1\}^\mathbb{Z}$ such that $h(X)\ge \log(\beta)$ and for all connected pattern $\tau$ that appears in $X$, $K(\tau)>d |\tau|-C$.
 \end{theorem}
 \begin{proof}
   We let $C=1+d+\log\left(\frac{\beta}{(2-\beta)(\beta-2^d)}\right)$.
We let $\F$ be the set of connected patterns $f$ such that $K(f)\le d |f|-C$.
Then for all $n$, \begin{equation*}|\{f\in \F: |f|=n\}|\le 2^{1+dn-C}\,.\end{equation*}
It implies
\begin{equation*}
2-\sum_{f\in \F} \beta^{1-|f|}\ge2-\sum_{i\ge1}2^{1+dn-C} \beta^{1-n}=2-\frac{2^{1-C+d}\beta}{\beta-2^d}=\beta
\end{equation*}
We apply Corollary \ref{resultZ2} to deduce that $h(X_\F)\ge \log \beta$ which concludes our proof.
 \end{proof}

This result is optimal in the sense that the only subshift $X$ with $h(X)=\log 2$ is the full shift so we cannot hope to do better than having $h(X)$ arbitrarily close to $\log 2$. 

It is also optimal in the sense that there is no constant $C$ such that we can avoid all the connected patterns $\tau$ such that $K(\tau)\le|\tau|-C$.\footnote{The case $K(\tau)=d|\tau|+o(1)$ is already implied by our Theorem by simply applying it with some $d'>d$.} Indeed, suppose that there is at least one pattern $u$ avoided, then we find an encoding $K'$ of the remaining patterns such that for all $v$ avoiding $u$, $K(v)= \mathcal{K'}(v)= \log\mathcal{O}\left((2^{|u|}-1)^{\frac{|v|}{|u|}}\right)=o({|v|})$.
This result is generalizable to any countable group instead of $\mathbb{Z}$ by applying Theorem \ref{mainth} instead of Corollary \ref{resultZ2}.

\section*{Acknowledgement}
The author would like to thank Emmanuel Jeandel and Nathalie Aubrun for bringing some of the mentioned results to his attention, Sebastián Barbieri for some helpful discussions and the two referees for helpful remarks.  

The author also thanks Georgii Veprev for noticing a critical mistake on an earlier version of this work.

\end{document}